\numberwithin{equation}{section}
\theoremstyle{plain}
\newtheorem{theorem}{Theorem}[section]
\newtheorem{corollary}[theorem]{Corollary}
\newtheorem{lemma}[theorem]{Lemma}
\newtheorem{proposition}[theorem]{Proposition}
\theoremstyle{definition}
\newtheorem{definition}[theorem]{Definition}
\newtheorem{example}[theorem]{Example}
\theoremstyle{remark}
\newtheorem{remark}[theorem]{Remark}
\DeclareMathOperator{\gldim}{\rm gldim}
\DeclareMathOperator{\lw}{\rm
LW}\DeclareMathOperator{\lc}{\rm LC}
\DeclareMathOperator{\ext}{\underline{Ext}}
\DeclareMathOperator{\tor}{\underline{Tor}}
\DeclareMathOperator{\gkdim}{\rm GKdim }
\DeclareMathOperator{\id}{\rm id }
\DeclareMathOperator{\sh}{\rm Sh }
\begin{document}

\title{Lyndon words for Artin-Schelter regular algebras}

\author{G.-S. Zhou\; and \; D.-M. Lu}

\address{Department of Mathematics, Zhejiang University,
Hangzhou 310027, China}

\email{10906045@zju.edu.cn; \quad dmlu@zju.edu.cn }

\begin{abstract}
We show certain invariants of graded algebras of which all obstructions are Lyndon words and provide some methods to construct Artin-Schelter regular algebras from a closed set of Lyndon words.
\end{abstract}

\subjclass[2000]{16E65, 16S15, 16W50, 14A22, 68R15}
\thanks {This research is supported by the NSFC (Grant No. 11271319).}


\keywords{Artin-Schelter regular algebra, Lyndon word, Braided bialgebra, Gr\"{o}bner basis}

\maketitle





\section*{Introduction}

\newtheorem{maintheorem}{\bf{Theorem}}
\renewcommand{\themaintheorem}{\Alph{maintheorem}}
\newtheorem{maincorollary}[maintheorem]{\bf{Corollary}}
\renewcommand{\themaincorollary}{}
\newtheorem{mainquestion}[maintheorem]{\bf{Question}}
\renewcommand{\themainquestion}{}

A Lyndon word is a non-empty word greater in lexicographical order than all of its rotations. Lyndon words share remarkable combinatoric and algebraic properties. For instance, they are wildly used in the context of Lie algebras and their universal enveloping algebras. Recently, monomial algebras defined by Lyndon words are studied in \cite{GF}. It will be interesting to explore Lyndon words in a more general context of associative algebras. This work is a practice of the idea for Artin-Schelter regular algebras.

Artin-Schelter regular algebras are a class of graded algebras which may be thought of as homogeneous coordinate rings of noncommutative spaces. They were introduced by Artin and Schelter \cite{ASc} in late 1980's, and have been extensively studied since then. The class of such algebras of which all the obstructions are Lyndon words is of particular interest. This class includes all low dimensional Artin-Schelter regular bigraded algebras in two generators \cite[Theorem 8.1]{ZL} and the universal enveloping algebras of positively graded finite dimensional Lie algebras. Invariants of algebras in this class are relatively easy to deal with, one of our goals is to find accessible approaches for constructing of new such algebras.

It is well-known that the standard bracketing of Lyndon words on an alphabet $X$ form a basis of the free Lie algebra ${\rm Lie}(X)$, and their monotonic products form a basis of the free associative algebra $k\langle X\rangle$. Given a Gr\"{o}bner set $G$ of Lie polynomials, then the algebra $k\langle X\rangle/(G)$, which is the universal enveloping algebra of the Lie algebra $\mathfrak{g}=\text{Lie}(X)/(G)_L$, is determined by irreducible Lyndon words modulo $G$ (see \cite{LR,BC}). The main tool of our approaches is a deformation of the standard bracketing of Lyndon words through a matrix $q$. We associate a closed set $U$ of Lyndon words with a graded algebra $A(U,q)$ (Definition \ref{Definition-A(U,q)}) and give a practical criterion for determining the regularity of $A(U,q)$. As an example, we are able to obtain the algebras $D(v,p)$ in \cite[Theorem A]{LPWZ} as a deformation of the universal enveloping algebra $D(-2,-1)$ in this sense, which is a motivation of this paper.

A natural question for Artin-Schelter regular algebras is the determination of their Gorenstein parameters. In \cite[Proposition 2.4]{FV}, the authors give an optimal estimation for the universal enveloping algebras of finite dimensional graded Lie algebras which are generated in degree one. In this paper, we obtain a counterpart of such estimation for a larger class of  Artin-Schelter regular algebras which are generated in degree one and of which all the obstructions are Lyndon words (Corollary \ref{Corollary-bound-Gorenstein-parameter}).

The organization of this paper is as follows. In Section 1, we introduce notations, recall the definitions of Lyndon words and Artin-Schelter regular algebras, and review some basic facts of Lyndon words. In Section 2, we give a picture of the graph of chains on antichains of Lyndon words, calculate algebraic and homological invariants of graded algebras of which the obstructions are Lyndon words. We devote Section 3 to the construction of Artin-Schelter regular algebras in terms of the generalized bracketing of Lyndon words. The results, in particular, give a connection between Artin-Schelter regular algebras and Hopf algebras.

We work over a fixed field $k$ of characteristic $0$. All vector spaces, algebras and unadorned tensor $\otimes$ are over $k$. The notation $\mathbb{N}$ denotes the set of non-negative integers.

\section{Preliminaries}

\subsection{Noncommutative Gr\"{o}bner Bases}
We begin by introducing some notations and terminologies that will be used in the sequel. For noncommutative Gr\"{o}bner bases theory, we refer the details to \cite{Li,Mora}.

Throughout $X=\{x_1,\cdots,x_n\}$ stands for a finite enumerated alphabet of letters. Denote by $X^*$ the set of all words on $X$ including the empty word 1. The \emph{length} of a word $u$ is denoted by $l(u)$, and the \emph{constitute} of $u$ is the tuple $ (r_1,\cdots,r_n)\in\mathbb{N}^n$, where $r_i$ is the number of occurrences of $x_i$ in $u$ for $1\leq i \leq n$, in particular, the constitute of $1$ is $(0,\cdots,0)$. With each $x_i$ we associate a positive integer $d_i$. Define the degree of $u$ by $\deg(u)=r_1d_1 +\cdots + r_nd_n$. The free algebra on $X$ is denoted by $k\langle X\rangle$, and it is considered as a graded algebra $k\langle X\rangle=\bigoplus_{m\geq0}k\langle X\rangle_m$, where $k\langle X\rangle_m$ is the linear span of all words of degree $m$.

We say that a word $u$ is a \textit{factor} of another word $v$ if $w_1uw_2=v$ for some words $w_1,w_2\in X^*$. If $w_1=1$ then $u$ is called a \textit{prefix} of $v$, and if $w_2=1$ then $u$ is called a \textit{suffix} of $v$. A factor $u$ of $v$ is called \textit{proper} if $u\neq v$. We fix an ordering $x_1<x_{2}<\cdots < x_n$ on the letters. The \emph{lexicographical order} (\emph{lex order}, for short) $<_{\rm lex}$ on $X^*$ is then given as follows: for $u, v\in X^*$, $u <_{\rm lex} v$ iff
\begin{itemize}
\item either there are factorizations $u=rx_is,v=rx_jt$ with $x_i<x_j$, or
\item $v$ is a proper prefix of $u$.
\end{itemize}
Clearly $<_{\rm lex}$ is a total order, which is preserved by left multiplication, but not always by right multiplication. For example, $x_2^2 <_{\rm lex}  x_2$, but $x_2^2x_1 >_{\rm lex} x_2x_1$. However, if $u<_{\rm lex}v$ and $v$ is not a prefix of $u$, then the order is preserved by right multiplication, even right multiplication by different words. In particular, this holds when $\deg(u)=\deg(v)$.

In the sequel we use the \emph{deglex} order $<_{\rm deglex}$ on $X^*$ given by, for $u, v\in X^*$,
\begin{eqnarray*}
\label{definition-deglex}
u <_{\rm deglex} v\quad \text{ iff }\quad\left\{
\begin{array}{llll}
\deg(u) < \deg(v),\quad \text{or}&&\\
\deg(u) = \deg(v)$ and $u<_{\rm lex}v.
\end{array}\right.
\end{eqnarray*}
It is an \emph{admissible order} on $X^*$, which means a well order on $X^*$ preserved by left and right multiplications. For any nonzero polynomial $f\in k\langle X\rangle$, the \textit{leading word} $\lw(f)$ of $f$ is the largest word occurs in $f$, and the \textit{leading coefficient} $\lc(f)$ of $f$ is the coefficient of $\lw(f)$ in $f$. A nonzero polynomial with leading coefficient $1$ is said to be \textit{monic}.

Let $G\subseteq k\langle X\rangle$ be a set of polynomials. The set of all leading words of nonzero polynomials in $G$ is denoted by $\lw(G)$. A word $u$ is \emph{reducible modulo $G$} (resp. \emph{irreducible modulo $G$}) provided that $u$ has a factor in $\lw(G)$ (resp. has no factor in $\lw(G)$). The set of all irreducible words modulo $G$ is denoted by ${\rm Irr}(G)$. If $0\not\in G$, every $f\in G$ is monic and all words occurred in $f$ are irreducible modulo $G\backslash\{f\}$, then we say $G$ is a \emph{reduced set}.

A $4$-tuple $(l_1,r_1,l_2,r_2)$ of words is called an \textit{ambiguity} of
a pair of words $(u_1,u_2)$ in case $l_1u_1r_1=l_2u_2r_2$ and
one of the following conditions holds: (1) $l_1=r_1=1$;
(2) $l_1=r_2=1$, $r_1$ is a nontrivial proper suffix of $u_2$
and $l_2$ is a nontrivial proper prefix of $u_1$.
We define a \textit{composition} of nonzero polynomials $f_1,f_2$ to be a polynomial of the form
\[S(f_1,f_2)[l_1,r_1,l_2,r_2]=\frac{l_1f_1r_1}{\lc(f_1)}-\frac{l_2f_2r_2}{\lc(f_2)},\] where $(l_1,r_1,l_2,r_2)$ is an ambiguity of $\big(\lw(f_1),\lw(f_2)\big)$.

\begin{definition}
Let $G$ be a subset of $k\langle X\rangle$. A polynomial $f$ is \emph{trivial modulo $G$} if it has a presentation
$$
f=\sum_{i\in I}a_iu_ig_iv_i,\quad a_i\in k,\; u_i, v_i\in X^*,\; g_i\in G\backslash\{0\}
$$
with $u_i\lw(g_i)v_i\leq_{\rm deglex}\lw(f)$. The set $G$ is called a \emph{Gr\"{o}bner set} in $k\langle X\rangle$ if any composition of nonzero polynomials in $G$ is trivial modulo $G$.
\end{definition}

Let $\mathfrak{a}$ be an ideal of $k\langle X\rangle$. A word $u$ is called an \emph{obstruction} of the algebra $A=k\langle X\rangle/\mathfrak{a}$ if $\lw(\mathfrak{a})$ contains $u$ but no proper factor of $u$. If $G$ is a Gr\"{o}bner set that generates $\mathfrak{a}$, then the set of obstructions of $A$ coincides with the set $\{u\in\lw(G)\,|\, u \text{ has no proper factor in }\lw(G)\}$. Note that there exists a unique reduced Gr\"{o}bner set $G_0$ that generates $\mathfrak{a}$. If $\mathfrak{a}$ is homogeneous, then $G_0$ consists of homogeneous polynomials and it contains a minimal generating set of $\mathfrak{a}$ (\cite[Lemma 2.1]{ZL}). The following version of Bergman's Diamond Lemma assures the usefulness of Gr\"{o}bner bases theory.

\begin{proposition}\cite[Theorem 1.2]{Berg}\label{Proposition-Diamond-Lemma}
Let $\mathfrak{a}$ be an ideal of $k\langle X\rangle$ and $G\subseteq \mathfrak{a}$. Then the following statements are equivalent:
\begin{enumerate}
\item $G$ is a Gr\"{o}bner set that generates $\mathfrak{a}$.
\item The leading word of every nonzero polynomial in $\mathfrak{a}$ is reducible modulo $G$.
\item Every polynomial in $\mathfrak{a}$ is trivial modulo $G$.
\item ${\rm Irr}(G)$ forms a basis of the quotient algebra $k\langle X\rangle/\mathfrak{a}$.
\end{enumerate}
\end{proposition}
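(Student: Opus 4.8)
The plan is to establish the cycle of implications $(1)\Rightarrow(3)\Rightarrow(2)\Rightarrow(4)\Rightarrow(1)$, with essentially all of the content sitting in $(1)\Rightarrow(3)$, which is Bergman's Diamond Lemma itself; the other three arrows are soft consequences of $<_{\rm deglex}$ being an admissible well order together with the standing hypothesis $G\subseteq\mathfrak{a}$. First I would set up the rewriting system attached to $G$. After rescaling we may take every $g\in G$ monic and write $g=\lw(g)-g'$ with all words of $g'$ strictly below $\lw(g)$. For words $u,v$ and $g\in G$, reducing a polynomial $p$ at a term $c\,u\lw(g)v$ means replacing that term by $c\,ug'v$; since the order is admissible, this trades a word for a $k$-combination of strictly smaller words, so --- passing to the well founded multiset order on the finite multisets of words occurring --- every chain of reductions terminates, and a polynomial admitting no reduction has all its words in ${\rm Irr}(G)$. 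Two remarks will be used repeatedly: a reduction alters $p$ by an element of the two-sided ideal $(G)$ and never raises $\lw(p)$; hence if $f$ can be rewritten to $0$ by finitely many reductions, collecting the steps gives a presentation $f=\sum_i a_iu_ig_iv_i$ with every $u_i\lw(g_i)v_i\leq_{\rm deglex}\lw(f)$, i.e. $f$ is trivial modulo $G$.

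For $(1)\Rightarrow(3)$ I would run the Diamond Lemma argument. Granting that $G$ is a Gr\"{o}bner set generating $\mathfrak{a}$ (so in particular $(G)=\mathfrak{a}$), the point is local confluence of the rewriting system: reductions at two disjoint marked factors of a word can be carried out in either order with the same result, while the only genuine clashes occur when the two marked factors overlap or when one contains the other --- these are exactly the overlap and inclusion ambiguities of leading words --- and the hypothesis that every composition is trivial modulo $G$ is precisely what lets one rewrite the two outcomes down to a common polynomial. Termination plus local confluence gives confluence (Newman's lemma), so the reduced form of a polynomial is unique and the resulting operator $r_G$ is $k$-linear; it restricts to the identity on the $k$-linear span of ${\rm Irr}(G)$, it kills every generator $ugv$ of $(G)$, and conversely anything rewriting to $0$ lies in $(G)$, so $\ker r_G=(G)$ and $k\langle X\rangle=(G)\oplus(k\text{-linear span of }{\rm Irr}(G))$. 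Then for any $f\in\mathfrak{a}=(G)$ we get $r_G(f)=0$, i.e. $f$ rewrites to $0$, and so $f$ is trivial modulo $G$ by the last remark of the previous paragraph; this is $(3)$.

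The remaining arrows are routine. $(3)\Rightarrow(2)$: for nonzero $f\in\mathfrak{a}$, writing $f=\sum_i a_iu_ig_iv_i$ with $u_i\lw(g_i)v_i\leq_{\rm deglex}\lw(f)$, the maximal word among the $u_i\lw(g_i)v_i$ must equal $\lw(f)$ and be attained with nonzero coefficient, so $\lw(f)$ has a factor in $\lw(G)$ and is reducible modulo $G$. $(2)\Rightarrow(4)$: using $G\subseteq\mathfrak{a}$, every reducible word is congruent modulo $\mathfrak{a}$ to a $k$-combination of strictly smaller words, so Noetherian induction along $<_{\rm deglex}$ shows ${\rm Irr}(G)$ spans $k\langle X\rangle/\mathfrak{a}$; and a nontrivial relation $\sum a_iu_i\in\mathfrak{a}$ among distinct irreducible words would be a nonzero element of $\mathfrak{a}$ whose leading word is one of the $u_i$, hence irreducible, contradicting $(2)$, so ${\rm Irr}(G)$ is linearly independent. $(4)\Rightarrow(1)$: the same spanning argument applied to $\mathfrak{b}=(G)\subseteq\mathfrak{a}$ shows ${\rm Irr}(G)$ spans $k\langle X\rangle/\mathfrak{b}$, and since the surjection $k\langle X\rangle/\mathfrak{b}\twoheadrightarrow k\langle X\rangle/\mathfrak{a}$ carries this spanning set bijectively onto a basis it is an isomorphism, whence $\mathfrak{b}=\mathfrak{a}$ and $G$ generates $\mathfrak{a}$; finally, reducing an arbitrary $f\in\mathfrak{a}=(G)$ to a reduced form $\hat f$ gives $\hat f\in\mathfrak{a}\cap(k\text{-linear span of }{\rm Irr}(G))=0$, so $f$ rewrites to $0$ and is trivial modulo $G$ --- in particular every composition of elements of $G$ is trivial modulo $G$, so $G$ is a Gr\"{o}bner set.

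I expect the only real obstacle to be the local confluence step inside $(1)\Rightarrow(3)$: verifying that overlap and inclusion ambiguities are the sole potential obstructions to confluence, and that triviality of the corresponding compositions repairs them, requires the careful case analysis of how two marked subwords of a common word can be positioned relative to one another, together with the bookkeeping needed to propagate the common rewrite through the untouched remainder of the word. Everything else reduces to induction along $<_{\rm deglex}$ and elementary linear algebra.
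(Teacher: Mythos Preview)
Your proposal is a correct and well-organized outline of the standard proof of Bergman's Diamond Lemma, with the cycle $(1)\Rightarrow(3)\Rightarrow(2)\Rightarrow(4)\Rightarrow(1)$ and the substantive content placed in the confluence argument for $(1)\Rightarrow(3)$; the only point I would tighten is the linearity of the normal-form operator $r_G$, which in Bergman's setup follows because reductions are defined as $k$-linear endomorphisms of $k\langle X\rangle$ and reduction-unique elements form a subspace.

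There is nothing to compare against in the paper itself: the proposition is stated with a citation to \cite[Theorem 1.2]{Berg} and no proof is given, so the authors simply quote the result. Your write-up supplies exactly the argument one would find in Bergman's paper or in standard references such as \cite{Li,Mora}.
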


\subsection{Lyndon Words}
In this subsection we give a short summary of facts about Lyndon words which we shall use. All of the facts in this subsection are well-known. Our main references are \cite{BC,GF,Kh,Lo}.

\begin{definition}
A word $u\in X^*$ is called a \textit{Lyndon word} if $u\neq1$ and $u>_{\rm lex} wv$ for every factorization $u=vw$ with $v,w\neq1$. The set of Lyndon words on $X$ is denoted $\mathbb{L}=\mathbb{L}(X)$.
\end{definition}

Immediately, by definition, there is no non-empty word that is both a proper  prefix and a proper suffix of a Lyndon word. In particular, every Lyndon word is not a power of another word. There are alternative characterizations for checking whether a word is Lyndon.

\begin{proposition}\label{Proposition-character-Lyndon}
The following statements are equivalent for any non-empty word $u$:
\begin{enumerate}
\item $u$ is a Lyndon word.
\item $u >_{\rm lex} w$ for every factorization $u=vw$ with $v,w \neq 1$.
\item $v >_{\rm lex} w$ for every factorization $u = vw$ with $v,w\neq1$.
\end{enumerate}
\end{proposition}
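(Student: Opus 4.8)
The plan is to prove the three-way equivalence by a cycle of implications: $(1)\Rightarrow(2)\Rightarrow(3)\Rightarrow(1)$, exploiting the total order properties of $<_{\rm lex}$ and, crucially, the behaviour of $<_{\rm lex}$ under right multiplication recorded in the preliminaries.

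First I would show $(1)\Rightarrow(2)$. Let $u$ be Lyndon and fix a factorization $u=vw$ with $v,w\neq1$. By definition $u>_{\rm lex}wv$. If $w$ were a prefix of $u$, say $u=wt$, then $t=v$ is non-empty and comparing $u=wv$ with... no, more carefully: I want $u>_{\rm lex}w$. Since $w$ is a proper suffix of $u$ and $u$ is Lyndon, $w$ cannot be a prefix of $u$ (no non-empty word is both a proper prefix and a proper suffix of a Lyndon word, as noted after the definition). Hence $u$ and $w$ differ at some position within the first $l(w)$ letters, i.e. there are factorizations $u=rx_is$, $w=rx_jt$ with $i\neq j$; the same comparison governs $u$ versus $wv$, and since $u>_{\rm lex}wv$ we must have $x_i>x_j$, whence $u>_{\rm lex}w$. (One must check the degenerate sub-case where the first point of difference occurs exactly when $w$ is exhausted, but that is precisely the excluded "prefix" situation.)

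Next, $(2)\Rightarrow(3)$: given $u=vw$ with $v,w\neq1$, apply $(2)$ to get $u>_{\rm lex}w$. Since $v$ is a proper prefix of $u$, $v$ is not a prefix of $w$ unless... again we use that $v$ is not a suffix of $u$'s relevant portion — here it is cleanest to note $u=vw$, so $v$ is a prefix of $u$; then $u>_{\rm lex}w$ together with the fact that $u$ has $v$ as a prefix forces, by inspecting where $u$ and $w$ first differ, that $v>_{\rm lex}w$ (the first difference between $u$ and $w$ lies within the first $l(v)$ letters, or $w$ is a proper prefix of $u$ — in either case this difference is already visible between $v$ and $w$, or $w$ is a proper prefix of $v$, giving $v>_{\rm lex}w$). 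Finally $(3)\Rightarrow(1)$: take any factorization $u=vw$, $v,w\neq1$; we want $u>_{\rm lex}wv$. From $(3)$, $v>_{\rm lex}w$. If $w$ is not a prefix of $v$, then $v>_{\rm lex}w$ is witnessed by a letter-disagreement within the first $\min(l(v),l(w))$ positions, and this same disagreement shows $vw>_{\rm lex}wv$ since it occurs within the common prefix length; as $u=vw$, we are done. The remaining case is $w$ a proper prefix of $v$, say $v=w v'$ with $v'\neq1$; then $u=wv'w$ and one reduces to comparing $v'w$ with $w\cdot$(something), iterating — this is the standard inductive argument on $l(u)$, using that $w$ itself, being a suffix of the Lyndon-candidate situation, cannot cause an infinite descent.

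The main obstacle is the bookkeeping around the "$v$ is a prefix of $w$" / "$w$ is a prefix of $v$" degenerate cases, where lex order is governed by the second clause (proper prefixes compare as larger) rather than by a letter disagreement; here right-multiplication is \emph{not} automatically order-preserving, so one cannot blithely pass from $v>_{\rm lex}w$ to $vx>_{\rm lex}wx$. The clean way around this, which I would adopt, is to induct on $l(u)$: in the degenerate case one always finds a strictly shorter factorization to which the inductive hypothesis (or one of the already-proved implications applied to a proper factor) applies, using that Lyndon words and the property in $(2)$ and $(3)$ are inherited appropriately by the relevant sub-factors. Alternatively, one can invoke directly the remark that when $\deg(u)=\deg(w)$ right multiplication preserves $<_{\rm lex}$, but since here the words have different lengths the inductive route is more robust.
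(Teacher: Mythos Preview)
Your cycle $(1)\Rightarrow(2)\Rightarrow(3)\Rightarrow(1)$ is a reasonable plan, and your argument for $(1)\Rightarrow(2)$ is correct. But the case analyses in the other two implications go wrong because you have the paper's prefix convention reversed: here, if $a$ is a \emph{proper prefix} of $b$ then $b<_{\rm lex}a$ (the shorter prefix is the \emph{larger} word). So in your $(2)\Rightarrow(3)$ the clause ``$w$ is a proper prefix of $v$, giving $v>_{\rm lex}w$'' is false --- that hypothesis yields $v<_{\rm lex}w$; the case you actually need (and which does give $v>_{\rm lex}w$) is ``$v$ is a proper prefix of $w$''. More seriously, in your $(3)\Rightarrow(1)$: from $(3)$ you have $v>_{\rm lex}w$, so ``$w$ a proper prefix of $v$'' is \emph{impossible}, and what you call the ``remaining case'' is vacuous. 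The genuinely delicate case is $v$ a proper prefix of $w$, which you never treat; note that ``$w$ not a prefix of $v$'' does \emph{not} force a letter disagreement, since $v$ could still be a proper prefix of $w$.

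The paper avoids $(3)\Rightarrow(1)$ altogether and proves $(3)\Rightarrow(2)$ instead, which is cleaner. The key device --- which your ``iterate / induct on $l(u)$'' gestures at but does not supply --- is this: when $v$ is a prefix of $w$, write $w=v^{k}w'$ with $k\geq1$ maximal, so that $v$ is not a prefix of $w'$ (and $w'\neq1$, since $(3)$ forbids $u$ from being a proper power). Then $u=v^{k+1}w'$, and $(3)$ applied to this factorization gives $v^{k+1}>_{\rm lex}w'$, hence $v>_{\rm lex}w'$; since $v$ is not a prefix of $w'$ this is a genuine letter disagreement, which propagates to $v^{k+1}>_{\rm lex}v^{k}w'=w$ and then, because $v^{k+1}$ is not a prefix of $w$, to $u=v^{k+1}w'>_{\rm lex}w$. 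This finite ``strip off copies of $v$'' trick replaces your vague induction and handles exactly the case your write-up misses.
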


\begin{proof}
By \cite[Lemma 2]{Kh} one has $(1) \Leftrightarrow (2)$ and it is easy to see $(2) \Rightarrow (3)$. Assume that $(3)$ holds and let $u = vw$ with $v,w\neq1$ be an arbitrary factorization. Note that $v>_{\rm lex}w$.  If $v$ is not a prefix of $w$, then $u = vw >_{\rm lex} w$. Otherwise $v$ is a prefix of $w$, and then $w=v^kw'$ for some $k\geq1$ and $w'\in X^*$ such that $v$ is not a prefix of $w'$. Therefore $u=v^{k+1}w'$ and $v\geq_{\rm lex} v^{k+1} >_{\rm lex} w'$. Observe that $v^{k+1} >_{\rm lex} v^kw' = w$ and $v^{k+1}$ is not a prefix of $w$, one gets $u>_{\rm lex}w$. Now the implication $(3) \Rightarrow (2)$ is completed.
\end{proof}

\begin{example}
The Lyndon words of length five or less in two letters $x_1,x_2$ are as follows:
\begin{eqnarray*}
x_1,\,x_2,\, x_2x_1,\, x_2x_1^2,\, x_2^2x_1,\, x_2x_1^3,\, x_2^2x_1^2, \,x_2^3x_1,\, x_2x_1^4,\, x_2x_1x_2x_1^2,\, x_2^2x_1^3,\, x_2^2x_1x_2x_1, \,x_2^3x_1^2,\,x_2^4x_1.
\end{eqnarray*}

\end{example}

Lyndon words have many good combinatorial features. We list below the corresponding version of \cite[Lemma 4.3]{GF}, \cite[Proposition 5.1.3, Proposition 5.1.4, Theorem 5.1.5]{Lo} and \cite[Lemma 4.5]{GF} respectively in our context.

\begin{proposition}
\label{proposition-facts-Lyndon}
\begin{enumerate}
\item Let $u=w_1w_2,\, v=w_2w_3$ be Lyndon words. If $u >_{\rm lex} v$ (which holds in priori when $w_2\neq1$), then $w=w_1w_2w_3$ is a Lyndon word.
\item If $u=u'u''$ with $u$ a Lyndon word and $u''$ its longest proper suffix that is Lyndon, then $u'$ is a Lyndon word. We call the pair $\sh(u)=(u',u'')$ the {\rm Shirshov factorization} of $u$.
\item Suppose that $u >_{\rm lex} v$ are Lyndon words. Then the Shirshov factorization of $uv$ is $(u,v)$ iff either $u$ is a letter or $u''\leq_{\rm lex} v$ when $\sh(u)=(u',u'')$.
\item Every word $u\neq1$ can be written uniquely as a product $u=w_1\cdots w_r$ of Lyndon words with $w_1\leq_{\rm lex} \cdots \leq_{\rm lex}w_r$. We call such a decomposition the {\rm Lyndon decomposition} of $u$.
\item If $v$ is a Lyndon word and it is a factor of $u$ with Lyndon decomposition $u=w_1\cdots w_r$, then $v$ is a factor of one of the words $w_1,\cdots,w_r$.
\end{enumerate}
\end{proposition}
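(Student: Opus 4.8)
All five assertions are classical combinatorics of Lyndon words (cf.\ \cite{BC,GF,Kh,Lo}); the plan is to transcribe the standard arguments to the order conventions in force here, in which a Lyndon word strictly exceeds each of its proper rotations in $<_{\rm lex}$ and, by the prefix clause in the definition of $<_{\rm lex}$, every word is $\leq_{\rm lex}$ each of its own prefixes. Two elementary facts are used over and over: \textbf{(P)}, the remark preceding Proposition \ref{Proposition-character-Lyndon}, that no non-empty word is simultaneously a proper prefix and a proper suffix of a Lyndon word; and a \textbf{stability principle (S)}: if $a>_{\rm lex}b$ and $a$ is not a prefix of $b$, then $ap>_{\rm lex}bq$ for all words $p,q$, since once the prefix clause is ruled out the inequality $a>_{\rm lex}b$ is witnessed by a genuine letter mismatch at a position $\leq\min(l(a),l(b))$, which appending on the right cannot destroy. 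I also use that $<_{\rm lex}$ is both preserved and reflected by left multiplication, and I would handle the parts in the order: (1); existence in (4); (5); uniqueness in (4); (2); (3).

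For (1), set $v:=w_2w_3$, so $w=w_1v$. If $w_2\neq1$ then $w_2$ is a non-empty proper suffix of the Lyndon word $u=w_1w_2$, so $u>_{\rm lex}w_2$, and by (P) $w_2$ is not a prefix of $u$; hence $u>_{\rm lex}v$ by (S), which settles the parenthetical. To see $w$ is Lyndon I verify Proposition \ref{Proposition-character-Lyndon}(2): let $z$ be a non-empty proper suffix of $w$. If $l(z)>l(v)$, then $z=sw_2w_3$ with $s$ a non-empty proper suffix of $w_1$, so $sw_2$ is a non-empty proper suffix of the Lyndon word $u$; thus $u>_{\rm lex}sw_2$, $sw_2$ is not a prefix of $u$ by (P), and (S) gives $w=(w_1w_2)w_3>_{\rm lex}(sw_2)w_3=z$. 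If $l(z)\leq l(v)$, then $z$ is a suffix of the Lyndon word $v$, so $z\leq_{\rm lex}v$, and it suffices to show $w>_{\rm lex}v$. When $w_2=1$ this reads $uv>_{\rm lex}v$, which holds by (S) because $u>_{\rm lex}v$ and $v$ is not a prefix of $u$ (as $u\neq v$). When $w_2\neq1$, the word $w_1$ is a non-empty proper prefix of $u$, so $w_1>_{\rm lex}u>_{\rm lex}v$; if $w_1$ is not a prefix of $v$ then $w=w_1v>_{\rm lex}v$ by (S), while if $w_1$ is a prefix of $v$, say $v=w_1g$ with $g\neq1$ (note $v=w_1$ is impossible, as it would make $v$ a proper prefix of $u$, contradicting $u>_{\rm lex}v$), then $w=w_1(w_1g)$ and, cancelling $w_1$ on the left, the claim reduces to $w_1g>_{\rm lex}g$, which is precisely the Lyndon property of $v=w_1g$ applied to the factorization $(w_1,g)$.

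The existence part of (4) follows by repeatedly peeling off the longest Lyndon prefix: the factors are Lyndon by construction and non-decreasing, since $w_i>_{\rm lex}w_{i+1}$ would make $w_iw_{i+1}$ Lyndon by (1), contradicting maximality of $w_i$. For (5), suppose the Lyndon word $v$ is a factor of $u=w_1\cdots w_r$ but of no single block; then it meets consecutive blocks, say $v=sw_{i+1}\cdots w_{j-1}t$ with $i<j$, $s$ a non-empty suffix of $w_i$ and $t$ a non-empty prefix of $w_j$. Proposition \ref{Proposition-character-Lyndon}(3) applied to $v=(sw_{i+1}\cdots w_{j-1})\cdot t$ gives $sw_{i+1}\cdots w_{j-1}>_{\rm lex}t$, while $sw_{i+1}\cdots w_{j-1}\leq_{\rm lex}s\leq_{\rm lex}w_i\leq_{\rm lex}w_j\leq_{\rm lex}t$ --- by the prefix clause, by $s$ being a suffix of the Lyndon word $w_i$, by the decomposition being non-decreasing, and by $t$ being a prefix of $w_j$ --- a contradiction. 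Uniqueness in (4) then follows from (5): in two non-decreasing Lyndon factorizations of $u$ the longer of the two leading factors is a Lyndon factor of $u$ beginning at the start of $u$, hence by (5) lies inside, so is a prefix of, the leading block of the other factorization, forcing the two leading factors to coincide, and one induces. For (2), let $u''$ be the longest proper suffix of $u$ that is Lyndon (it exists once $u$ is not a letter, the final letter being such) and write $u=u'u''$; to prove $u'$ Lyndon via Proposition \ref{Proposition-character-Lyndon}(2) I show $u'>_{\rm lex}z$ for every non-empty proper suffix $z$ of $u'$. First, $z$ cannot also be a prefix of $u'$: if it were, write $u'=zu_1'=u_1''z$, so $zu''$ is a proper suffix of $u=u_1''zu''$ and $u=zu_1'u''>_{\rm lex}zu''$ yields $u''<_{\rm lex}u_1'u''$ after cancelling $z$; but by (5) and the maximality of $u''$ the last block of the Lyndon decomposition of $u_1'u''$ is $u''$, so $u_1'u''=c_1\cdots c_{k-1}u''$ with $c_1\leq_{\rm lex}\cdots\leq_{\rm lex}u''$ and $k\geq2$, whence $u_1'u''<_{\rm lex}c_1\leq_{\rm lex}u''$ by the prefix clause --- a contradiction. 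Given this, the comparison of $u'$ with the shorter word $z$ is by a genuine letter mismatch, hence survives deleting the common tail $u''$ from $u=u'u''>_{\rm lex}zu''$ (valid as $zu''$ is a proper suffix of the Lyndon word $u$), giving $u'>_{\rm lex}z$. Finally for (3): by (1) $uv$ is Lyndon, and its Shirshov right factor is its longest proper Lyndon suffix; every suffix of $uv$ longer than $v$ has the form $tv$ with $t$ a non-empty proper suffix of $u$, and using (2) together with the fact (a consequence of (P)) that the proper Lyndon suffixes of $u$ are nested, with longest and largest both $u''$, one checks that such a $tv$ is Lyndon exactly when $u''>_{\rm lex}v$; hence $\sh(uv)=(u,v)$ iff no proper Lyndon suffix of $uv$ is longer than $v$, i.e.\ iff $u$ is a letter or $u''\leq_{\rm lex}v$, which is the stated dichotomy.

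There is no deep idea here; the recurring difficulty is purely bookkeeping, because nearly every step promotes a strict inequality $a>_{\rm lex}b$ to $ap>_{\rm lex}bq$, which is legitimate only when $a$ is not a prefix of $b$, so this hypothesis must be certified again and again, typically via (P) or the prefix clause of $<_{\rm lex}$. I expect the heaviest such bookkeeping in part (2) --- in particular in ruling out that a proper suffix of $u'$ is a prefix of $u'$ --- and hence in part (3), which rests on it; everything else is a matter of choosing the right factorization and invoking Proposition \ref{Proposition-character-Lyndon}.
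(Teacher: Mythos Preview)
The paper does not prove this proposition at all: it simply records that the five items are the corresponding versions of results in \cite{GF,Lo} and moves on. Your proposal therefore cannot be compared to a proof in the paper; what you have written is a self-contained verification in the paper's order conventions, and it is essentially correct and well organized (the ordering $(1)\to$ existence in $(4)\to(5)\to$ uniqueness in $(4)\to(2)\to(3)$ avoids circularity).

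Two small points deserve attention. First, in part~(1), case $w_2=1$, $z=v$: to invoke your stability principle (S) with $a=u$, $b=v$ you need ``$u$ is not a prefix of $v$'', not ``$v$ is not a prefix of $u$''. The latter is true but irrelevant; the former can fail (e.g.\ $u=x_2$, $v=x_2x_1$). When $u$ \emph{is} a proper prefix of $v$, write $v=ug$ and reduce $uv>_{\rm lex}v$ to $ug>_{\rm lex}g$, exactly as you already do in the $w_2\neq1$ branch. Second, your uniqueness argument for (4) tacitly uses the \emph{strong} form of (5), namely that every \emph{occurrence} of a Lyndon factor lies inside a single block; your proof of (5) does establish this (the contradiction is derived from an arbitrary block-spanning occurrence), but it would be cleaner to state (5) in that stronger form before invoking it.
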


\begin{remark}
Parts $(1),(2)$ of Proposition \ref{proposition-facts-Lyndon} allow us to get every Lyndon word by starting with $X$ and concatenating inductively each pair of Lyndon words $v,w$ with $v>_{\rm lex}w$. Note that in the Lyndon decomposition $u=w_1\cdots w_r$, $w_r$ is the longest Lyndon suffix of $u$.
\end{remark}

\begin{lemma}\label{Lemma-Shirshov-factor-dichotomy}
Assume that $u$ is a Lyndon word with $\sh(u)=(u',u'')$. If $v$ is a Lyndon word that is a factor of $u$, then either $v$ is a factor of $u'$, or $v$ is a factor of $u''$, or $v$ is a prefix of $u$ such that $l(v)> l(u')$.
\end{lemma}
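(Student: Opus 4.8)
The plan is to examine one occurrence of $v$ inside $u=u'u''$ and to split into cases according to its position relative to the cut between $u'$ and $u''$. First I would fix a factorization $u=pvs$ with $p,s\in X^*$. If $l(p)+l(v)\le l(u')$, this occurrence lies inside $u'$, so $v$ is a factor of $u'$; if $l(p)\ge l(u')$, it lies inside $u''$, so $v$ is a factor of $u''$. In the only remaining case, $l(p)<l(u')<l(p)+l(v)$, the word $v$ straddles the cut, and the whole point will be to show that then $p=1$: once this is known, $v$ is a prefix of $u$, and the straddling inequality forces $l(v)>l(u')-l(p)=l(u')$, which is exactly the third alternative.

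For the straddling case I would first read off the factorizations it imposes. Writing $u'=pa$ with $a\ne 1$ and cancelling $p$ from $u=p\,a\,u''=p\,v\,s$ gives $a\,u''=v\,s$, and since $l(v)>l(u')-l(p)=l(a)$ the word $a$ is a prefix of $v$; writing $v=ac$ and cancelling $a$ yields $u''=cs$, with $c\ne 1$ because $l(c)=l(v)-l(a)>0$. Thus $u=p\,a\,c\,s$, where $v=ac$ and $u''=cs$ are Lyndon words and $a,c\ne 1$.

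Now suppose, for contradiction, that $p\ne 1$. Then $acs$ is a proper suffix of $u$, with $l(acs)=l(a)+l(cs)=l(a)+l(u'')>l(u'')$. Since $ac=v$ and $cs=u''$ are Lyndon and $c\ne 1$, Proposition \ref{proposition-facts-Lyndon}(1), applied with $w_1=a$, $w_2=c$, $w_3=s$ (the hypothesis $w_1w_2>_{\rm lex}w_2w_3$ being automatic because $w_2\ne 1$), shows that $acs$ is a Lyndon word. So $acs$ is a proper Lyndon suffix of $u$ strictly longer than $u''$, contradicting that $u''$ is the longest proper Lyndon suffix of $u$. Hence $p=1$, and $v=ac$ is a prefix of $u$ with $l(v)>l(u')$.

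The main obstacle is the straddling case, and its crux is the observation that Proposition \ref{proposition-facts-Lyndon}(1) converts the overlap data $v=ac$, $u''=cs$ into a Lyndon suffix $acs$ longer than $u''$, which is incompatible with the Shirshov factorization unless that overlap already starts at the beginning of $u$. Everything else --- the two non-straddling cases and the length bookkeeping --- is routine, and throughout one uses the standing assumption that $\sh(u)$ is defined, so that $u$ is not a single letter and $u'\ne 1$.
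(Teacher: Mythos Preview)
Your proof is correct and essentially the same as the paper's: both reduce to the straddling case, and both use Proposition~\ref{proposition-facts-Lyndon}(1) on the overlap $v=ac$, $u''=cs$ (the paper writes these as $v=lr$, $u''=rq$) to produce a proper Lyndon suffix of $u$ longer than $u''$, contradicting the definition of the Shirshov factorization. The only difference is that you spell out the routine case split and the length bookkeeping, whereas the paper compresses the whole argument into two sentences.
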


\begin{proof}
It is suffice to exclude the possibility that there are factorizations $u'=pl,\, v=lr,\, u''=rq$ with $p,l,r\neq1$. Otherwise one would have a proper Lyndon suffix $lu''$ of $u$ by the part (1) of Proposition \ref{proposition-facts-Lyndon}, which is impossible by the part $(2)$ of Proposition \ref{proposition-facts-Lyndon}.
\end{proof}

\subsection{Artin-Schelter regular algebras}
An \emph{Artin-Schelter regular algebra} (AS-regular, for short) is a positively $\mathbb{Z}$-graded algebra $A=\bigoplus_{m\geq 0}A_m$ which is connected ($A_0=k$) and satisfies the following three conditions:
\begin{enumerate}
\item[(AS1)]
$A$ has finite global dimension $d$;
\item[(AS2)]
$A$ has finite Gelfand-Kirillov dimension ($\gkdim$, for short);
\item[(AS3)]$A$ is Gorenstein; that is, for some $l\in \mathbb{Z}$,
\begin{eqnarray*}
\ext^i_A(k_A,A)=\left\{
\begin{array}{ll}
0,&i\neq d,\\
k(l),& i=d,
\end{array}\right.
\end{eqnarray*}
where $k_A$ is the trivial right $A$-module $A/A_{>0}$, and the notation $(l)$ is the degree $l$-shifting on graded modules. The index $l$ will be called the \textit{Gorenstein parameter} of $A$.
\end{enumerate}

All known examples of AS-regular algebras are strongly Noetherian, Auslander-regular and Cohen-Macaulay. We refer to \cite[Section 5]{ZZ2} for a review of the definitions.

\section{Invariants of graded algebras}

In this section we focus on the calculation of invariants of graded algebras of which all obstructions are Lyndon words. We provide the details in the way for a consistency. The readers will note some of familiar results, for example in \cite{GF,GI}, with different forms and different certificates.

\begin{definition}
We call a set $U$ of Lyndon words \emph{closed} if $U \supseteq X$ and $U$ contains each Lyndon word that is a factor of some word in $U$. Also we call a set $V$ of words an \emph{antichain} if any word in $V$ has no proper factor in $V$.
\end{definition}

Let $\mathcal{Y}$ be the set of all closed subsets of $\mathbb{L}$ and let $\mathcal{Z}$ be the set of all subsets of $\mathbb{L}$ that contain no letters. Define three set maps as follows:
\begin{eqnarray*}
\Phi: \mathcal{Y} \to \mathcal{Z}, & \quad &\Phi(U) = \{\ v\in \mathbb{L}\backslash U\;|\; v \text{ has no proper factor in }\mathbb{L}\backslash U\ \}. \\
\bar{\Phi}: \mathcal{Y} \to \mathcal{Z}, & \quad &\bar{\Phi}(U) = \{\ v\in \mathbb{L}\backslash U\;|\; v',v''\in U \text{ whenever }\sh(v)=(v',v'')\ \}. \\
\Psi: \mathcal{Z} \to \mathcal{Y}, &\quad &\Psi(U) = \{\ v\in \mathbb{L}\;\;|\;v \text{ has no factor in }U\ \}.
\end{eqnarray*}
Clearly $\Phi(U)$ is always an antichain for any $U\in \mathcal{Y}$, and $\Phi(\Psi(V))=V$ for any antichain $V\in \mathcal{Z}$. Also one has $\Phi(U)\subseteq \bar{\Phi}(U)$ for any $U\in \mathcal{Y}$ and $\Psi \circ \Phi = \Psi \circ \bar{\Phi} = \id_{\mathcal{Y}}$.

\begin{example}\label{example-closed-set}
We list some typical examples of closed subsets of $\mathbb{L}(\{x_1,x_2\})$, which occur in the presentation of low dimensional $\mathbb{Z}^2$-graded Artin-Schelter regular algebras \cite[Theorem 8.1]{ZL}.

{\small\begin{center}
\begin{tabular}{|c|l|l|}
\hline \textbf{Case} & \textbf{Closed Set} & $\mathbf\Phi$ \textbf{and} $\mathbf{\bar{\Phi}}$ \\
\hline (1)& $U_2=\{x_1, x_2\}$ & $\Phi(U_2)=\bar{\Phi}(U_2)=\{x_2x_1\}$ \\ \hline
(2)& $U_3=\{x_1, x_2x_1, x_2\}$ & $\Phi(U_3)=\bar{\Phi}(U_3)=\{x_2x_1^2, x_2^2x_1\}$ \\ \hline
(3)& $U_4=\{x_1, x_2x_1, x_2^2x_1, x_2\}$ & $\Phi(U_4)=\bar{\Phi}(U_4)= \{x_2x_1^2, x_2^2x_1x_2x_1, x_2^3x_1\}$ \\ \hline
(4)& $U_5=\{x_1, x_2x_1, x_2^2x_1x_2x_1, x_2^2x_1, x_2 \}$ & \parbox[t][9mm]{57mm}{$\Phi(U_5)=\{x_2x_1^2, x_2^2x_1x_2x_1x_2x_1, x_2^2x_1x_2^2x_1x_2x_1, x_2^3x_1\}\hskip2cm $ $\bar{\Phi}(U_5)= \{x_2x_1^2,\ x_2^2\!x_1\!x_2\!x_1\!x_2\!x_1,\ x_2^2\!x_1\!x_2^2\!x_1\!x_2\!x_1,\ x_2^3\!x_1\!x_2\!x_1,\ x_2^3x_1\}$} \\ \hline
(5)& $U_5'=\{x_1, x_2x_1, x_2^2x_1, x_2^3x_1, x_2\}$ & $\Phi(U_5')=\bar{\Phi}(U_5')= \{x_2x_1^2, x_2^2x_1x_2x_1, x_2^3x_1x_2^2x_1, x_2^4x_1\}$ \\ \hline
(6)& $U_5''\!= \{x_1, x_2x_1^2, x_2x_1, x_2^2x_1, x_2\}$ & $\Phi(U_5'')\!=\bar{\Phi}(U_5'')\!= \{x_2x_1^3, x_2x_1x_2x_1^2, x_2^2x_1^2, x_2^2x_1x_2x_1,x_2^3x_1\}$\\ \hline
\end{tabular}\end{center}}

Case $(4)$ shows that $\Phi\neq \bar{\Phi}$ in general.
\end{example}

\begin{example}[Fibonacci words]\label{Example-Fibonacci}
The sequence of \emph{Fibonacci words} $\{f_m\}_{m\geq0}$ on two words $x_1,x_2$ is given by the initial conditions $f_0=x_1$, $f_1=x_2$, and then for $r\geq1$, $f_{2r} = f_{2r-1}f_{2r-2}$ and $f_{2r+1} = f_{2r-1}f_{2r}$. Obviously the length of the $p$-th Fibonacci word is the $p$-th Fibonacci number. A simple induction on $r$ gives that every Fibonacci word is a Lyndon word and they are sorted as
$$
f_0 <_{\rm lex} f_2 <_{\rm lex} \cdots <_{\rm lex} f_{2r} <_{\rm lex} \cdots <_{\rm lex} f_{2r+1} <_{\rm lex} \cdots <_{\rm lex} f_3 <_{\rm lex} f_1.
$$
Also one has the following facts:
\begin{enumerate}
\item$ \sh(f_{2r})=(f_{2r-1},f_{2r-2})$ and $\sh(f_{2r+1}) = (f_{2r-1}, f_{2r})$ for any $r\geq1$.
\item The set $U$ of all Fibonacci words is a closed set with $\Phi(U)=\{f_{2r-1}f_{2r+1}\,|\, r\geq1\} \cup \{f_{2r}f_{2r-2}\,|\, r\geq 1\}.$
\item For any $p\geq 2$, the set $U_p=\{f_0,f_1,\cdots, f_{p-1}\}$ is a closed set with
    $
    \Phi(U_p)=\{f_{2r-1}f_{2r+1}\,|\, r\geq1,\, 2r+1\leq p\} \cup \{f_{2r}f_{2r-2}\,|\, r\geq 1,\,2r\leq p\} \cup\{f_p\},
    $
    in particular, $\#(\Phi(U_p))=p-1$.
\end{enumerate}
Clearly, the sets $U_2,U_3,U_4,U_5$ coincide with that given in Example \ref{example-closed-set}.
\end{example}

\begin{proof}
(1) A simple induction on $r$ together with the part (3) of Proposition \ref{proposition-facts-Lyndon} gives the result.

(2) We firstly show that any proper Lyndon factor $v$ of $f_{2r}$ is a factor of $f_{2r-1}$ or of $f_{2r-2}$. By Lemma \ref{Lemma-Shirshov-factor-dichotomy} and the part (1), it suffices to exclude the possibility that $v$ is a prefix of $f_{2r}$ such that $l(v)>l(f_{2r-1})$. Otherwise there would exist $1\leq k\leq r-1$ such that $f_{2r}=f_{2r-2k-1}wf_{2r-2k-1}f_{2r-2k-2}$ and $l(f_{2r-1})\leq l(f_{2r-2k-1}w) < l(v) \leq l(f_{2r-2k-1}wf_{2r-2k-1})$. Let $v=f_{2r-2k-1}wt$, where $t\neq1$. Then $t$ is both a prefix and a suffix of $v$, which is impossible. A similar discussion gives that any proper Lyndon factor $v$ of $f_{2r+1}$ is a factor of $f_{2r-1}$ or of $f_{2r}$. Thus Lyndon factors of Fibonacci words are Fibonacci words and so $U$ is a closed set.

Now set $W=\{f_{2r-1}f_{2r+1}\,|\, r\geq1\} \cup \{f_{2r}f_{2r-2}\,|\, r\geq 1\}$. We have $W\subseteq \Phi(U)$ by Proposition \ref{Propositoin-bound-Phi-construction}. Note that every $f_p$ has a proper prefix $f_{2r-1}$ (resp. a suffix $f_{2r}$) whenever $2r-1<p$ (resp. $2r<p$). Therefore if $i>j+1$, then $f_{2i}f_{2j}$ has a proper suffix $f_{2j+2}f_{2j}$ and $f_{2j-1}f_{2i-1}$ has a proper prefix $f_{2j-1}f_{2j+1}$. Also if $i>j$ then $f_{2i-1}f_{2j-2}$ has a proper suffix $f_{2j}f_{2j-2}$, if $i=j$ then $f_{2j-1}f_{2j-2}=f_{2j}$, if $i=j-1$ then $f_{2j-3}f_{2j-2}=f_{2j-1}$, and if $i<j-1$ then $f_{2i-1}f_{2j-2}$ has a proper prefix $f_{2i-1}f_{2i+1}$. Since every element $u\in\Phi(U)$ is of the form $f_rf_s$ with $f_r>_{\rm lex}f_s$, the above discussion implies that $u\in W$. Thus $\Phi(U)=W$.

(3) Clearly $U_p$ is a closed set by the part (2) above. The equality is obtained by a similar discussion of that for $U$ given in the previous paragraph.
\end{proof}

The next result gives a bound of $\Phi(U)$ and $\bar{\Phi}(U)$ for any $U\in \mathcal{Y}$. It generalizes \cite[Lemma 5.2]{GF}. For any closed set $U$, let $$\Upsilon(U)=\{(u,v)\,|\, u,v\in U,\, u>_{\rm lex}v\, \text{and there is no}\, w \in U \,\text{such that}\, u>_{\rm lex}w>_{\rm lex}v \}.$$

\begin{proposition}\label{Propositoin-bound-Phi-construction}
Let $U$ be a closed set of Lyndon words. Then $\sh(uv)=(u,v)$ for any $(u,v)\in \Upsilon(U)$ and the map $\Upsilon(U) \to \mathbb{L}$, given by assigning each $(u,v)\in \Upsilon(U)$ to the concatenating $uv$, is an injection with image contained in $\Phi(U)$. In particular, $$\{uv\,|\,(u,v)\in \Upsilon(U)\}\subseteq \Phi(U)\subseteq \bar{\Phi}(U)\subseteq \{uv\,|\,u>_{\rm lex}v\in U\}\backslash U.$$
\end{proposition}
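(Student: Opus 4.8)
The plan is to prove, in turn, that (i) $\sh(uv)=(u,v)$ for $(u,v)\in\Upsilon(U)$, (ii) the assignment $(u,v)\mapsto uv$ is injective, (iii) $uv\notin U$, and (iv) every proper factor of $uv$ lying in $\mathbb{L}$ already lies in $U$, so that $uv\in\Phi(U)$; the displayed chain of inclusions then drops out by unwinding the definitions of $\Phi$ and $\bar\Phi$.

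For (i) I would apply Proposition \ref{proposition-facts-Lyndon}(3): since $u>_{\rm lex}v$ are Lyndon, $\sh(uv)=(u,v)$ holds once $u$ is a letter or $u''\leq_{\rm lex}v$, where $\sh(u)=(u',u'')$. In the remaining case $u''$ is a proper Lyndon factor of $u\in U$, hence $u''\in U$ by closedness, and $u>_{\rm lex}u''$ by Proposition \ref{Proposition-character-Lyndon}(2); if $u''>_{\rm lex}v$ held, $u''$ would be an element of $U$ strictly between $u$ and $v$, contradicting $(u,v)\in\Upsilon(U)$. This proves (i) and, in particular, that $uv$ is a Lyndon word. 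Then (ii) is immediate from uniqueness of Shirshov factorizations: $u_1v_1=u_2v_2$ forces $(u_1,v_1)=\sh(u_1v_1)=\sh(u_2v_2)=(u_2,v_2)$. For (iii), $u$ is a proper prefix and $v$ a proper suffix of the Lyndon word $uv$, so $u>_{\rm lex}uv>_{\rm lex}v$ by the definition of $<_{\rm lex}$ and Proposition \ref{Proposition-character-Lyndon}(2); were $uv\in U$ it would once more be sandwiched strictly between $u$ and $v$ inside $U$.

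The heart of the argument is (iv). By (i) we may use $\sh(uv)=(u,v)$, so Lemma \ref{Lemma-Shirshov-factor-dichotomy} tells us that a proper Lyndon factor $z$ of $uv$ is either a factor of $u$, or a factor of $v$ (in both cases $z\in U$ by closedness), or a prefix of $uv$ with $l(z)>l(u)$; hence it suffices to show $uv$ has no proper Lyndon prefix of length $>l(u)$. Suppose there is one and choose a shortest such, $z=us$ with $s\neq1$; then $s$ is a proper prefix of $v$, say $v=sr$ with $r\neq1$, and we examine $\sh(z)=(\tilde u,\tilde v)$. If $l(\tilde v)>l(s)$, then $\tilde v=u_1s$ with $u_1$ a nonempty proper suffix of $u$, and gluing the Lyndon words $\tilde v=u_1s$ and $v=sr$ along their common middle factor $s$ via Proposition \ref{proposition-facts-Lyndon}(1) makes $u_1sr=u_1v$ Lyndon; but $u_1v$ is a proper suffix of $uv$ longer than $v$, contradicting that $v$ is the longest proper Lyndon suffix of $uv$. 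Otherwise $l(\tilde v)\leq l(s)$, so $\tilde v$ is a Lyndon suffix of $s$, and comparing lengths against the Lyndon decomposition $s=y_1\cdots y_m$ one sees $\tilde v=y_m$ (its longest Lyndon suffix, which is already a proper Lyndon suffix of $z$). If $m=1$, then $s=\tilde v$ is Lyndon, hence a Lyndon factor of $v$, so $s\in U$; but $u>_{\rm lex}s$ by Proposition \ref{Proposition-character-Lyndon}(3) applied to $z=u\cdot s$, and $s>_{\rm lex}v$ since $s$ is a proper prefix of $v$, so $s$ is an element of $U$ strictly between $u$ and $v$, contradicting $(u,v)\in\Upsilon(U)$. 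If $m\geq2$, then $\tilde u=uy_1\cdots y_{m-1}$ is Lyndon by Proposition \ref{proposition-facts-Lyndon}(2), is a prefix of $z$ (hence of $uv$), and satisfies $l(u)<l(\tilde u)<l(z)$, contradicting the minimality of $z$. Thus no offending prefix exists, every proper Lyndon factor of $uv$ lies in $U$, and with (iii) this yields $uv\in\Phi(U)$.

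For the displayed chain: the first inclusion is (iv); the middle inclusion $\Phi(U)\subseteq\bar\Phi(U)$ is the elementary fact already recorded before the statement (if $\sh(w)=(w',w'')$ then $w'$ and $w''$ are proper Lyndon factors of $w$, so they lie in $U$ when $w\in\Phi(U)$); and for the last inclusion, if $w\in\bar\Phi(U)$ with $\sh(w)=(w',w'')$ then $w=w'w''$ with $w',w''\in U$ and $w'>_{\rm lex}w''$ by Proposition \ref{Proposition-character-Lyndon}(3), while $w\notin U$, so $w\in\{uv\,|\,u>_{\rm lex}v\in U\}\backslash U$. The one real obstacle I anticipate is the claim that $uv$ has no proper Lyndon prefix longer than $u$: this is false for a generic Lyndon word and its Shirshov factorization, so the proof must genuinely use both the $\Upsilon(U)$-adjacency of $(u,v)$ and the closedness of $U$, and the induction on the length of a shortest offending prefix — split three ways according to how $\sh(z)$ sits relative to $s$ — is the device that forces a contradiction in every case.
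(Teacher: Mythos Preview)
Your proof is correct, and parts (i), (iii), and the final chain of inclusions match the paper's. Your injectivity argument (ii) is in fact slicker than the paper's: having established $\sh(uv)=(u,v)$, you just invoke uniqueness of Shirshov factorization, whereas the paper argues directly that if $uv=u'v'$ with $u$ a proper prefix of $u'$, then the overlap word sits strictly between $u$ and $v$ in $U$.

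The genuine difference is in step (iv). The paper does \emph{not} try to show that every proper Lyndon factor of $uv$ lies in $U$; instead it argues indirectly: if $uv\notin\Phi(U)$, then $uv$ has some proper Lyndon factor outside $U$, hence (by taking a minimal one) a proper factor $w\in\Phi(U)$. The point of passing to $\Phi(U)$ is that the Shirshov pieces $w',w''$ of $w$ are then automatically in $U$, so after Lemma~\ref{Lemma-Shirshov-factor-dichotomy} locates $w$ as a prefix with $l(w')\geq l(u)$, a two-line case split ($l(w')=l(u)$ versus $l(w')>l(u)$) produces an element of $U$ strictly between $u$ and $v$. Your route --- a shortest-offending-prefix argument with a three-way split on how $\sh(z)$ sits relative to the overhang $s$ --- reaches the same contradiction but has to work harder precisely because you never get the Shirshov pieces of your bad factor into $U$ for free. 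The paper's trick of replacing an arbitrary bad Lyndon factor by one in $\Phi(U)$ is what buys the shorter argument; your approach, on the other hand, proves the slightly stronger statement that $uv$ has no proper Lyndon prefix longer than $u$ at all, not merely none outside $U$.
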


\begin{proof}
Firstly we show that $\sh(uv)=(u,v)$ for any $(u,v)\in \Upsilon(U)$. If $u$ is a letter, then it is true. If $u$ is not a letter, write $\sh(u)=(u',u'')$. Then $u>_{\rm lex}v\geq_{\rm lex} u''$ and thus $\sh(u)=(u,v)$ by the part (3) of Proposition \ref{proposition-facts-Lyndon}.

Secondly we show that the map is injective. Otherwise, assume that $uv=u'v'$ for some distinct pairs $(u,v),\,(u'v')\in \Upsilon(U)$, then without lost of generality, let $u$ be a proper prefix of $u'$. Observe that there is a factorization $u'=rw,\, v=ws$ with $r,w,s\neq1$, one gets a contradiction $u>_{\rm lex} u' >_{\rm lex} w >_{\rm lex} v$ by Proposition \ref{Proposition-character-Lyndon}.

Thirdly we show $uv\in \Phi(U)$ for any $(u,v)\in \Upsilon(U)$. Note that $uv\not\in U$ for otherwise one would have $u>_{\rm lex}uv>_{\rm lex}v$. Assume that $uv\not\in \Phi(U)$, then $uv$ has a proper factor $w\in \Phi(U)$. Let $\sh(w)=(w',w'')$, then $w',w''\in U$ and $w$ is a prefix of $uv$ with $l(w')\geq l(u)$ by Lemma \ref{Lemma-Shirshov-factor-dichotomy}. If $l(w')=l(u)$, then $w''$ is a proper prefix of $v$ and hence $u=w'>_{\rm lex} w'' >_{\rm lex} v$ which is impossible. If $l(w')>l(u)$, then there are factorizations $w'=us,\, v=sv''$ with $s,v''\neq1$, which gives rise to a contradiction $u>_{\rm lex} w' >_{\rm lex} s >_{\rm lex} v$ by Proposition \ref{Proposition-character-Lyndon}. The inclusion relations now follows immediately.
\end{proof}

\begin{corollary}
Let $U$ be a finite closed set of Lyndon words with $\#(U)=d$. Then
$$d-1 \leq \#(\Phi(U)) \leq \#(\bar{\Phi}(U)) \leq \frac{d(d-1)}{2}.$$
\end{corollary}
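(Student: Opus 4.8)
The plan is to read everything off Proposition \ref{Propositoin-bound-Phi-construction}, which already pins $\Phi(U)$ and $\bar\Phi(U)$ between a set built from $\Upsilon(U)$ and a set of pairs from $U$; the only remaining work is to count those two bounding sets. The middle inequality $\#(\Phi(U))\le\#(\bar\Phi(U))$ needs nothing new: it is forced by the inclusion $\Phi(U)\subseteq\bar\Phi(U)$ recorded immediately after the definitions of $\Phi$ and $\bar\Phi$. So the task splits into the upper bound on $\#(\bar\Phi(U))$ and the lower bound on $\#(\Phi(U))$.

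For the upper bound I would exhibit an injection of $\bar\Phi(U)$ into the set $P=\{(u,v)\mid u,v\in U,\ u>_{\rm lex}v\}$ via the Shirshov factorization. Every $w\in\bar\Phi(U)$ is a Lyndon word, so it has a Shirshov factorization $\sh(w)=(w',w'')$, and by the very definition of $\bar\Phi(U)$ one has $w',w''\in U$. Since $w=w'w''$ is Lyndon, Proposition \ref{Proposition-character-Lyndon} gives $w'>_{\rm lex}w''$, so $(w',w'')\in P$; the assignment $w\mapsto\sh(w)$ is injective because $w$ is recovered as the concatenation $w'w''$ of the two coordinates. As $<_{\rm lex}$ restricts to a total order on the $d$-element set $U$, we get $\#(P)=\binom{d}{2}=\tfrac{d(d-1)}{2}$, hence $\#(\bar\Phi(U))\le\tfrac{d(d-1)}{2}$. (Alternatively, one can simply invoke the last displayed inclusion of Proposition \ref{Propositoin-bound-Phi-construction}, namely $\bar\Phi(U)\subseteq\{uv\mid u>_{\rm lex}v\in U\}\backslash U$, together with the Shirshov factorization to see that the concatenation map is injective on the relevant subset of pairs.)

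For the lower bound I would use the first inclusion of Proposition \ref{Propositoin-bound-Phi-construction}: the concatenation $(u,v)\mapsto uv$ embeds $\Upsilon(U)$ injectively into $\Phi(U)$, so $\#(\Phi(U))\ge\#(\Upsilon(U))$. It then remains to count $\Upsilon(U)$. Enumerating $U$ in increasing order $u_1<_{\rm lex}u_2<_{\rm lex}\cdots<_{\rm lex}u_d$, the definition of $\Upsilon(U)$ says exactly that its elements are the consecutive pairs $(u_{i+1},u_i)$ for $1\le i\le d-1$, so $\#(\Upsilon(U))=d-1$ and therefore $\#(\Phi(U))\ge d-1$. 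Combining the three estimates yields the claimed chain of inequalities. There is no genuine obstacle here; the only point deserving a line of care is verifying that the Shirshov map $\bar\Phi(U)\to P$ is both well-defined ($w',w''\in U$ is precisely the membership condition of $\bar\Phi(U)$, and $w'\ne w''$ since $w'>_{\rm lex}w''$) and injective, and that $\Upsilon(U)$ really consists only of the $d-1$ adjacent pairs in the lex order on $U$.
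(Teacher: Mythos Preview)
Your proposal is correct and follows exactly the route the paper intends: the corollary is stated without proof because all three inequalities are immediate from Proposition~\ref{Propositoin-bound-Phi-construction}, and your argument unpacks precisely that---counting $\Upsilon(U)$ as the $d-1$ adjacent pairs for the lower bound, using $\Phi(U)\subseteq\bar\Phi(U)$ for the middle, and bounding $\bar\Phi(U)$ by the $\binom{d}{2}$ ordered pairs for the upper bound. Your Shirshov-factorization injection is a mild rephrasing of the last inclusion in the proposition (since $\{uv\mid u>_{\rm lex}v\in U\}$ has at most $\binom{d}{2}$ elements as the image of concatenation on pairs), but the content is identical.
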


\begin{definition}
Let $V$ be an antichain of words. The {\it graph of chains on $V$} is the directed graph $\Gamma(V)$, whose set of vertices is $\{1\} \cup (X\backslash V) \cup S$, where $S$ is the set of all proper suffix of words in $V$ that is of length $\geq2$, and whose set of arrows is defined as follows: for any $x_i\in X\backslash V$ there is an arrow $1\to x_i$; for any two vertices  $u,v\in (X\backslash V) \cup S$, there is an arrow $u\to v$ if and only $uv$ has a unique factor in $V$ which is a suffix of $uv$.

Clearly if $1\to v_1\to v_2\to \cdots \to v_p$ and $1\to v_1'\to v_2'\to \cdots\to v_q'$ are two paths in $\Gamma(V)$ such that $v_1v_2\cdots v_p= v_1'v_2'\cdots v_q'$, then $p=q$ and $v_i=v_i'$ for $i=1,\cdots, p$.

For $p\geq 1$, a word $w$ is called a {\it $p$-chain on $V$} if there is path $1\to v_1\to v_2\to \cdots\to v_p$ in $\Gamma(V)$ such that $w=v_1v_2\cdots v_p$. The empty word $1$ is called the $0$-chain on $V$.  Denote by $C_p(V)$ the set of all $p$-chains on $V$ for all $p\geq0$. Readily one has $C_{0}(V)=\{1\}$, $C_{1}(V)=X\backslash V$ and $C_{2}(V)=V\backslash X$.
\end{definition}

\begin{lemma}\label{Lemma-chain-extending}
Let $U$ be a closed set of Lyndon words and let $V=\Phi(U)$. Then
\begin{enumerate}
\item for any path $v_1\to v_2\to v_3$ in $\Gamma(V)$, if there exist $u_1 >_{\rm lex} \bar{u}_2\in U$ and a non-empty suffix $u_1'$ of $u_1$ such that $u_1'\bar{u}_2$ is a suffix of $v_1v_2$ and $l(\bar{u}_2)\leq l(v_2) \leq l(u_1')+l(\bar{u}_2)$, then there exist $u_2,\bar{u}_3\in U$ and a word $t_2$ such that $v_3=t_2\bar{u}_3$, $u_2$ is a suffix of $v_2t_2$, $u_2':=\bar{u}_2t_2$ is a suffix of $u_2$ and $u_1>_{\rm lex}u_2>_{\rm lex}\bar{u}_3$.
\item for any $p\geq 2$ and any path $v_1\to v_2\to \cdots\to v_p$ in $\Gamma(V)$, if there exist $u_1,\bar{u}_2\in U$ such that $u_1\bar{u}_2 = v_1v_2$ and $l(\bar{u}_2)\leq l(v_2)$, then there exist $u_2,\cdots,u_{p-1},\bar{u}_p$ in $U$ and a non-empty suffix $u_i'$ of $u_i$ for $i=2,\cdots, p-1$ such that $u_1>_{\rm lex}u_2>_{\rm lex}\cdots>_{\rm lex}\bar{u}_p$ and $v_1v_2\cdots v_p=u_1u_2'u_3'\cdots u_{p-1}'\bar{u}_p$.
\item for any $p\geq 2$ and any path $1\to v_1\to v_2\to \cdots \to v_p$ in $\Gamma(V)$, there exists a sequence $u_1>_{\rm lex}u_2 >_{\rm lex}\cdots >_{\rm lex}u_p$ in $U$ and a non-empty suffix $u_i'$ of $u_i$ for $i=2,\cdots, p-1$ such that $v_1v_2\cdots v_p=u_1u_2'u_3'\cdots u_{p-1}'u_p$.
\end{enumerate}
\end{lemma}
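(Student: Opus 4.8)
We prove the three statements in order: Part (1) is a self-contained combinatorial fact about Lyndon words, Part (2) follows from it by induction on $p$, and Part (3) follows from Part (2) by feeding in the Shirshov factorization of the two-chain $v_1v_2$. Two remarks are used throughout. (a) Every vertex of $\Gamma(V)$ other than $1$ lies in $(X\setminus V)\cup S$ and hence has no factor in $V$: for a letter this is clear, and if $s$ is a proper suffix of some $w_0\in V=\Phi(U)$ then any Lyndon factor of $s$ is a proper Lyndon factor of $w_0$, hence lies in $U$ by the definition of $\Phi(U)$, while $s\notin V$ because $V$ is an antichain. (b) Since $V\subseteq\bar{\Phi}(U)$, for each $w\in V$ the factorization $\sh(w)=(a,b)$ has $a,b\in U$, and $a>_{\rm lex}b$ since $w$ is Lyndon (Proposition \ref{Proposition-character-Lyndon}(3)).

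For Part (1) I would first unpack the hypotheses: $l(\bar{u}_2)\leq l(v_2)$ makes $\bar{u}_2$ a suffix of $v_2$, say $v_2=s\bar{u}_2$, and $l(v_2)\leq l(u_1')+l(\bar{u}_2)$ makes $s$ a suffix of $u_1'$, hence of $u_1$. The arrow $v_2\to v_3$ supplies a unique factor $w\in V$ of $v_2v_3$ that is a suffix of $v_2v_3$; by (a) it is not a factor of $v_3$, so $w=s'v_3$ with $s'$ a non-empty suffix of $v_2$, and by (b), $\sh(w)=(a,b)$ with $a,b\in U$ and $a>_{\rm lex}b$. The mechanism is that cutting $w$ at its Shirshov point prescribes how to continue the decomposition: take $\bar{u}_3$ to be the longest Lyndon suffix of $v_3$ (it lies in $U$ because $v_3\in(X\setminus V)\cup S$), put $t_2$ equal to the prefix of $v_3$ with $v_3=t_2\bar{u}_3$, and take $u_2\in U$ to be an appropriate Lyndon suffix of $v_2t_2$ having $\bar{u}_2t_2$ as a suffix, built from $a$ together with the already placed $\bar{u}_2$ and seen to lie in $U$ by closedness. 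The argument splits into the two cases $l(s')\geq l(\bar{u}_2)$ and $l(s')<l(\bar{u}_2)$, according to whether the marked suffix $\bar{u}_2$ of $v_2$ sits inside $w$ or straddles its left end; in each case the length relations $v_3=t_2\bar{u}_3$, $u_2$ a suffix of $v_2t_2$, and $u_2':=\bar{u}_2t_2$ a suffix of $u_2$ are bookkeeping. The substance is the two strict inequalities: $u_1>_{\rm lex}u_2$, which follows from $u_2$ beginning with a suffix of $u_1$ (or with $u_1$ itself) via Proposition \ref{Proposition-character-Lyndon} and the concatenation rule Proposition \ref{proposition-facts-Lyndon}(1); and $u_2>_{\rm lex}\bar{u}_3$, which follows from Proposition \ref{Proposition-character-Lyndon}(3) applied to $w$ together with Lemma \ref{Lemma-Shirshov-factor-dichotomy} locating $\bar{u}_3$ and $u_2$ inside $w$. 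This case analysis is where I expect the real work to be.

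For Part (2) I would induct on $p$, the case $p=2$ being the hypothesis. First, the hypothesis already forces $u_1>_{\rm lex}\bar{u}_2$: the unique factor $w\in V$ of $v_1v_2$ that is a suffix of $v_1v_2$ is not a factor of $\bar{u}_2$ (as $U$ is closed and $U\cap V=\emptyset$), so $l(w)>l(\bar{u}_2)$ and $w=u_1'\bar{u}_2$ for a non-empty suffix $u_1'$ of $u_1$; then Proposition \ref{Proposition-character-Lyndon}(2) gives $u_1\geq_{\rm lex}u_1'$ and Proposition \ref{Proposition-character-Lyndon}(3), applied to the Lyndon word $w=u_1'\bar{u}_2$, gives $u_1'>_{\rm lex}\bar{u}_2$. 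Hence Part (1) applies to $v_1\to v_2\to v_3$ with this $u_1'$, producing $u_2,\bar{u}_3,t_2$ with $v_3=t_2\bar{u}_3$ and $u_2':=\bar{u}_2t_2$ a suffix of $u_2$. One then checks that the new triple $(u_2,u_2',\bar{u}_3)$ satisfies the hypotheses of Part (1) for the shifted path $v_2\to v_3\to v_4$ — this uses only that $\bar{u}_2$ is a suffix of $v_2$ together with the length relations in the conclusion of Part (1) — and iterates $p-2$ times. Collecting the outputs gives $u_1>_{\rm lex}u_2>_{\rm lex}\cdots>_{\rm lex}\bar{u}_p$, and telescoping $v_1v_2=u_1\bar{u}_2$ with $v_{k+2}=t_{k+1}\bar{u}_{k+2}$, $u_{k+1}'=\bar{u}_{k+1}t_{k+1}$ yields $v_1v_2\cdots v_p=u_1u_2'u_3'\cdots u_{p-1}'\bar{u}_p$.

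For Part (3): the arrow $1\to v_1$ forces $v_1\in X\setminus V\subseteq U$; since $v_1$ is a letter and, by (a), $v_2$ has no factor in $V$, the unique factor of $v_1v_2$ in $V$ is $v_1v_2$ itself, so $v_1v_2\in V$ and, by (b), $\sh(v_1v_2)=(a,b)$ with $a,b\in U$, $a>_{\rm lex}b$, and $l(b)=l(v_1v_2)-l(a)\leq l(v_2)$ (since $l(v_1)=1\leq l(a)$). Applying Part (2) with $u_1:=a$ and $\bar{u}_2:=b$ yields $u_2,\dots,u_{p-1},u_p:=\bar{u}_p$ in $U$ and non-empty suffixes $u_i'$ of $u_i$ for $2\leq i\leq p-1$ with $u_1>_{\rm lex}u_2>_{\rm lex}\cdots>_{\rm lex}u_p$ and $v_1v_2\cdots v_p=u_1u_2'u_3'\cdots u_{p-1}'u_p$, which is the assertion (for $p=2$ it reads $v_1v_2=ab$). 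The remaining work in Parts (2) and (3) is bookkeeping plus remarks (a) and (b); the delicate step, and the main obstacle, is the case analysis carried out in Part (1).
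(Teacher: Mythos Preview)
Your overall architecture is exactly the paper's: Part~(1) is the engine, Part~(2) is obtained by iterating Part~(1) $p-2$ times (the paper writes simply ``Let $u_1'=u_1$. Apply part~(1) iteratively''), and Part~(3) follows from Part~(2) by taking $(u_1,\bar u_2)=\sh(v_1v_2)$ and noting $l(\bar u_2)\le l(v_2)$ since $v_1$ is a letter. Your treatment of Parts~(2) and~(3) is correct and in fact slightly more detailed than the paper's; your explicit verification that $u_1>_{\rm lex}\bar u_2$ at the start of Part~(2) fills in something the paper leaves tacit.

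The gap is Part~(1): you describe the mechanism and even name the right candidate for $\bar u_3$, but you do not carry out the case analysis, and you say so yourself. The paper's proof here is a genuine four-way split. Writing $w_2$ for the unique $V$-factor of $v_2v_3$ and $\sh(w_2)=(w_2',w_2'')$, the cases are governed by the position of $l(w_2'')$ and $l(w_2)$ relative to $l(v_3)$ and $l(\bar u_2)+l(v_3)$. In two of the cases one takes $u_2=\bar u_2 t_2$ and must check it is Lyndon via Proposition~\ref{proposition-facts-Lyndon}(1); in a third case $u_2=w_2'$; and the fourth case is handled by first passing to a carefully chosen Lyndon suffix $\bar w$ of $w_2''$ and then reducing to one of the earlier cases. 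Your fixed choice ``$\bar u_3=$ longest Lyndon suffix of $v_3$'' does turn out to agree with the paper's $\bar u_3$ in every case, but the construction of $u_2$ and the verification of $u_1>_{\rm lex}u_2>_{\rm lex}\bar u_3$ genuinely depend on which case one is in, and your two-case split on $l(s')$ versus $l(\bar u_2)$ captures only one of the two dichotomies; you will still need to subdivide on $l(w_2'')$ versus $l(v_3)$ to finish.
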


\begin{proof}
(1) Let $w_2$ be the unique factor of $v_2v_3$ in $V$ with $\sh(w_2)=(w_2',w_2'')$. There are four cases:
     \begin{itemize}
     \item[(i)] $l(w_2'')\leq l(v_3)$ and $l(w_2) \leq l(\bar{u}_2)+ l(v_3)$. Let $v_3=tw_2''$. Set $u_2=\bar{u}_2t_2$ and $\bar{u}_3=w_2''$. Then $u_2$ is a Lyndon word by the part (1) of Proposition \ref{proposition-facts-Lyndon}, and $u_1>_{\rm lex} \bar{u}_2\geq_{\rm lex} u_2\geq_{\rm lex} w_2'>_{\rm lex} w_2''=\bar{u}_3$.
     \item[(ii)] $l(w_2'')\leq l(v_3)$ and $l(w_2) > l(\bar{u}_2)+ l(v_3)$. Then $w_2'=a\bar{u}_2t_2$ and $v_3=t_2w_2''$ for some words $a \ (\neq 1)$ and $t_2$. Set $u_2=w_2'$, $\bar{u}_3=w_2''$. Then $u_1 >_{\rm lex} u_2>_{\rm lex} \bar{u}_3$.
     \item[(iii)] $l(v_3) < l(w_2'')\leq l(\bar{u}_2)+ l(v_3)$. Clearly one can find a Lyndon suffix $\bar{w}$ of $w_2''$ such that $l(\bar{w})>l(v_3)$ and $l(\bar{w}'')\leq l(v_3)$, where $\sh(\bar{w}) = (\bar{w}',\bar{w}'')$. Let $v_3=t_2\bar{w}''$. Set $u_2= \bar{u}_2t_2$ and $\bar{u}_3=\bar{w}''$. Then again $u_2$ is a Lyndon word by the part (1) of Proposition \ref{proposition-facts-Lyndon}, and $u_1 >_{\rm lex} \bar{u}_2 \geq_{\rm lex} u_2 \geq_{\rm lex} \bar{w}' >_{\rm lex} \bar{w}''=\bar{u}_3$.
     \item[(iv)] $l(w_2'')> l(\bar{u}_2)+ l(v_3)$. Then one can find a Lyndon suffix $\bar{w}$ of $w_2''$ such that $l(\bar{w})>l(\bar{u}_2)+ l(v_3)$ and $l(\bar{w}'')\leq l(\bar{u}_2)+ l(v_3)$, where $\sh(\bar{w}) = (\bar{w}',\bar{w}'')$. In this situation, there are two subcases: one is $l(\bar{w}'')\leq l(v_3)$, which reduces to the case (ii); and the other one is $l(\bar{w}'')>l(v_3)$, which reduces to the case (iii).
     \end{itemize}
    We have exhausted all possibilities and the result (1) follows.

(2) Let $u_1'=u_1$. Apply the part (1) iteratively $p-2$ times, one gets the result.

(3) Let $u_1, \bar{u}_2$ be given by $\sh(v_1v_2)=(u_1,\bar{u}_2)$. Readily $l(\bar{u}_2)\leq l(v_2)$ because $v_1$ is a letter. The result now follows from the part (2) by setting $u_p=\bar{u}_p$.
\end{proof}

\begin{lemma}\label{Lemma-chain-Lyndon}
Let $U$ be a closed set of Lyndon words and let $V=\Phi(U)$. Let $p\geq 2$ and $u_1,u_2,\cdots,u_p\in U$ with $(u_1,u_2),\cdots,(u_{p-1},u_p)\in \Upsilon(U)$. Let $u_1=x_iu_1'$. Then $1\to x_i\to u_1'u_2\to u_3\to \cdots \to u_p$ is a path in $\Gamma(V)$. In particular the concatenating $u_1u_2\cdots u_p$ is a $p$-chain on $V$.
\end{lemma}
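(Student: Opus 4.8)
The plan is to check the path $1\to x_i\to u_1'u_2\to u_3\to\cdots\to u_p$ one arrow at a time. The only inputs needed are Proposition~\ref{Propositoin-bound-Phi-construction} --- which gives $u_ju_{j+1}\in\Phi(U)=V$ for every consecutive pair, in particular $u_1u_2=x_iu_1'u_2\in V$ --- together with the two structural facts that $V$ is an antichain and that $V$ contains no letters. First I would confirm that the words appearing in the path are vertices of $\Gamma(V)$. Since $V$ has no letters, $x_i\in X\setminus V$, so $1\to x_i$ is an arrow. Next, $u_1'u_2$ is a proper suffix of $u_1u_2\in V$, and for $3\le j\le p$ the word $u_j$ is a proper suffix of $u_{j-1}u_j\in V$ (proper because $u_{j-1}\ne1$); hence each of $u_1'u_2,u_3,\ldots,u_p$ is either a single letter --- in which case it lies in $U$, hence not in $V$, hence in $X\setminus V$ --- or has length at least $2$, in which case it lies in $S$. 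In all cases it is a vertex, and this dichotomy also absorbs the two boundary situations $u_1'=1$ (when $l(u_1)=1$) and ``$u_2$ or some $u_j$ is a letter.''

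Then I would verify the arrows using a single recurring observation: if $w\in V$ is a suffix of a word $z$, then $w$ is the \emph{unique} factor of $z$ that lies in $V$ and is a suffix of $z$. Indeed, any such factor $s$ is again a suffix of $z$, so $s$ and $w$ are comparable for the suffix order, and the shorter is then a proper factor of the longer; since both lie in the antichain $V$, this forces $s=w$. Applying the observation with $z=x_i(u_1'u_2)=u_1u_2$ and $w=u_1u_2$ gives the arrow $x_i\to u_1'u_2$; with $z=(u_1'u_2)u_3=u_1'(u_2u_3)$ and $w=u_2u_3\in V$ (a suffix of $z$) gives $u_1'u_2\to u_3$; and with $z=u_ju_{j+1}\in V$ and $w=z$, for $3\le j\le p-1$, gives the arrows $u_j\to u_{j+1}$. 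This produces the asserted path, whose concatenation is $x_i\cdot(u_1'u_2)\cdot u_3\cdots u_p=u_1u_2\cdots u_p$; hence $u_1u_2\cdots u_p$ is a $p$-chain on $V$.

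I do not expect a genuine obstacle: the lemma is a bookkeeping consequence of Proposition~\ref{Propositoin-bound-Phi-construction} and the antichain property of $\Phi(U)$. The one slightly delicate step is the middle arrow $u_1'u_2\to u_3$, where the distinguished member of $V$ is the \emph{proper} suffix $u_2u_3$ of $u_1'u_2u_3$ rather than the whole word --- but the two-sided suffix comparison in the observation above is exactly what handles this, and (reading the definition of $\Gamma(V)$ as demanding a unique factor-in-$V$-which-is-a-suffix) it simultaneously supplies the existence of that factor in each case.
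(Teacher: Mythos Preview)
There is a genuine gap, and it sits exactly where you flagged the ``slightly delicate step.'' Your reading of the arrow condition in $\Gamma(V)$ is too weak: the clause ``$uv$ has a unique factor in $V$ which is a suffix of $uv$'' must be parsed as ``$uv$ has exactly one factor in $V$, and that factor is a suffix,'' not as ``$uv$ has exactly one suffix lying in $V$.'' (This is the reading forced by Anick's construction, and it is the one the paper itself uses in its proof.) Your antichain observation only rules out a second \emph{suffix} of $z$ in $V$. For the arrows $x_i\to u_1'u_2$ and $u_j\to u_{j+1}$ with $j\ge 3$ this happens to suffice, because there the whole word $z$ itself lies in $V$, so the antichain property already forbids \emph{any} proper factor of $z$ from lying in $V$. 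But for $u_1'u_2\to u_3$ the word $z=u_1'u_2u_3$ is not in $V$, and you must also exclude a factor $w=lu_2r\in V$ with $l$ a nonempty suffix of $u_1'$ and $r$ a nonempty proper prefix of $u_3$. Such a $w$ is neither a factor of $u_1u_2$ nor of $u_2u_3$, so no antichain comparison is available.

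Excluding this $w$ is precisely the content of the paper's argument, and it genuinely needs the $\Upsilon(U)$ hypothesis --- a hypothesis your proposal never invokes beyond its consequence $u_ju_{j+1}\in V$. Since $w\in\Phi(U)$, write $\sh(w)=(w',w'')$ with $w',w''\in U$; Lemma~\ref{Lemma-Shirshov-factor-dichotomy}, applied to the Lyndon factor $u_2$ of $w$, forces the Shirshov split of $w$ to fall outside $u_2$, leaving four possible positions relative to $l,u_2,r$. In each case one of $w',w''$ turns out to be an element of $U$ lying strictly (in lex order) between two of $u_1,u_2,u_3$, contradicting that $(u_1,u_2),(u_2,u_3)\in\Upsilon(U)$. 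The ``no element of $U$ strictly between consecutive $u_i$'' condition is the crux, and the antichain property of $V$ alone cannot replace it.
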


\begin{proof}
Note that $x_i,u_1'u_2, u_3,\cdots,u_p$ are vertices of $\Gamma(V)$ and there are arrows $x_i\to u_1'u_2$, $u_3\to u_4$,$\cdots$, $u_{p-1}\to u_p$ in $\Gamma(V)$ by Proposition \ref{Propositoin-bound-Phi-construction}. Thus it remains to show that there is an arrow $u_1'u_2\to u_3$ in $\Gamma(V)$. Otherwise, there is a non-empty suffix $l$ of $u'$ and a non-empty proper prefix $r$ of $u_3$ such that $w=lu_2r\in V$. Then by Lemma \ref{Lemma-Shirshov-factor-dichotomy}, there are four cases:
\begin{itemize}
\item[(i)] $\sh(w)=(l,u_2r)$. One has $u_2>_{\rm lex} u_2r>_{\rm lex}u_3$.
\item[(ii)] $\sh(w)=(l',l''u_2r)$ for some non-empty proper prefix $l'$ of $l$. One has $u_1>_{\rm lex} l''u_2r >_{\rm lex} u_3$.
\item[(iii)]  $\sh(w)=(lu_2, r)$. One has $u_1 >_{\rm lex} lu_2 >_{\rm lex} u_2$.
\item[(iv)] $\sh(w)=(lu_2r', r'')$ for some non-empty proper suffix $r''$ of $r$. One has $u_1 >_{\rm lex} lu_2r' >_{\rm lex} u_3$.
\end{itemize}
Since there is no member in $U$ other than $u_2$ between $u_1$ and $u_3$, all cases listed above is impossible and hence there is an arrow $u_1'u_2\to u_3$ in $\Gamma(V)$.
\end{proof}

\begin{remark}
In \cite[Section 4]{GI}, the author characterized the graph of chains on an antichain $V$, for which the monomial algebra $k\langle X\rangle/(V)$ is of finite global dimension and of finite GK-dimension. It is worth to mention that Lemma \ref{Lemma-chain-Lyndon} is equivalent to \cite[Lemma 4.21]{GI}, while the part (3) of Lemma \ref{Lemma-chain-extending} is stronger than \cite[Lemma 4.22]{GI}.
\end{remark}

\begin{proposition}\label{proposition-chain-unique}
Let $U=\{z_1 >_{\rm lex} z_2 >_{\rm lex} \cdots >_{\rm lex} z_d\}$ be a finite closed set of Lyndon words and let $V=\Phi(U)$. Then $1\to z_1\to z_2 \to \cdots \to z_d$ is the unique path of length $d$ in $\Gamma(V)$ started at $1$ and there is no path of length $p > d$ in $\Gamma(V)$ started at $1$. In particular, $C_d(V)=\{z_1z_2\cdots z_d\}$ and $C_p(V)=\emptyset$ for any $p>d$.
\end{proposition}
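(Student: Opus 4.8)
The plan is to dispatch the three assertions separately: that $1\to z_1\to z_2\to\cdots\to z_d$ is a path by Lemma \ref{Lemma-chain-Lyndon}; that there is no path of length $p>d$ from $1$, and that the displayed path is the only one of length $d$, both via Lemma \ref{Lemma-chain-extending}(3) (the second with extra care).

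I would first record two facts. Since $x_n$ is the lexicographically largest Lyndon word on $X$ — a Lyndon word $\neq x_n$ either begins with a smaller letter or has $x_n$ as a proper prefix, and is $<_{\rm lex}x_n$ in either case — and $X\subseteq U$, the maximum $z_1$ of $U$ equals $x_n$, a single letter. Also, as $U$ is linearly ordered $z_1>_{\rm lex}\cdots>_{\rm lex}z_d$, the set $\Upsilon(U)$ is exactly $\{(z_i,z_{i+1}):1\leq i\leq d-1\}$. Applying Lemma \ref{Lemma-chain-Lyndon} to $u_1=z_1,\dots,u_d=z_d$ yields a path $1\to x_n\to u_1'u_2\to u_3\to\cdots\to u_d$ in $\Gamma(V)$; because $z_1=x_n$ is a letter, $u_1'=1$ and $u_1'u_2=z_2$, so this path is precisely $1\to z_1\to z_2\to\cdots\to z_d$ and $z_1z_2\cdots z_d\in C_d(V)$.

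For the rest, let $1\to v_1\to\cdots\to v_p$ be any path in $\Gamma(V)$; the cases $p\leq1$ are trivial (as $d=\#U\geq\#X\geq1$), so take $p\geq2$. By Lemma \ref{Lemma-chain-extending}(3) there is a strictly decreasing sequence $u_1>_{\rm lex}\cdots>_{\rm lex}u_p$ in $U$, so $p\leq\#U=d$; hence no path has length $>d$, and $C_p(V)=\emptyset$ for $p>d$. Now suppose $p=d$. Then $u_1>_{\rm lex}\cdots>_{\rm lex}u_d$ exhausts $U$, forcing $u_i=z_i$ for all $i$; in particular $u_1=z_1=x_n$ is a letter. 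I claim this forces $v_i=z_i$ for every $i$, which gives uniqueness. The argument follows the iterated use of Lemma \ref{Lemma-chain-extending}(1) inside the proof of parts (2)--(3). By induction on the iteration index $i$ one checks that the datum $\bar u_{i+1}$ feeding the $i$-th application of Lemma \ref{Lemma-chain-extending}(1) actually equals $v_{i+1}$, so $l(\bar u_{i+1})+l(v_{i+2})=l(v_{i+1}v_{i+2})$; since the unique $V$-factor $w$ of $v_{i+1}v_{i+2}$ is a suffix of $v_{i+1}v_{i+2}$, one has $l(w'')<l(w)\leq l(v_{i+1}v_{i+2})$, so cases (ii) and (iv) of that lemma's proof are excluded. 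Thus one is always in case (i) or (iii), where $u_{i+1}'=\bar u_{i+1}t_{i+1}$ is the whole word $u_{i+1}$ and $v_{i+2}=t_{i+1}\bar u_{i+2}$. Then $z_{i+1}=u_{i+1}=\bar u_{i+1}t_{i+1}=v_{i+1}t_{i+1}$, so $v_{i+1}$ is a prefix of the Lyndon word $z_{i+1}$; but $v_{i+1}=\bar u_{i+1}\in U$ with $v_{i+1}<_{\rm lex}u_i=z_i$, hence $v_{i+1}\leq_{\rm lex}z_{i+1}$, whereas a proper prefix of $z_{i+1}$ is $>_{\rm lex}z_{i+1}$. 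This contradiction gives $v_{i+1}=z_{i+1}$ and $t_{i+1}=1$, whence $\bar u_{i+2}=v_{i+2}$, which is the hypothesis needed at the next stage. The base case records that $u_1=x_n$ is a letter, forcing $v_1=x_n=z_1$ and $\bar u_2=v_2$; and the final index gives $v_d=\bar u_d=u_d=z_d$. Hence the length-$d$ path is unique and $C_d(V)=\{z_1z_2\cdots z_d\}$.

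The step I expect to be the real obstacle is this last one: Lemma \ref{Lemma-chain-extending}(3) by itself only delivers $v_1v_2\cdots v_d=z_1\,z_2'\,z_3'\cdots z_{d-1}'\,z_d$ with each $z_i'$ merely a nonempty suffix of $z_i$, and to conclude $v_i=z_i$ one must re-enter the mechanism of Lemma \ref{Lemma-chain-extending}(1)--(2) and verify that the intermediate data remain full-length. The single letter $z_1=x_n$ at the head of the path is what seeds this, and the length accounting $l(w)\leq l(v_{i+1}v_{i+2})$ is what propagates it; everything else is a direct appeal to the two preceding lemmas.
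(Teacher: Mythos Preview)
Your proof is correct and follows essentially the same strategy as the paper's: both use Lemma \ref{Lemma-chain-Lyndon} for existence, Lemma \ref{Lemma-chain-extending}(3) for the length bound, and then re-enter the case analysis inside the proof of Lemma \ref{Lemma-chain-extending}(1) to force $v_i=z_i$ step by step via the prefix/counting argument. The only organizational difference is that you first fix all $u_i=z_i$ from a single application of part (3) and then inspect the iteration, whereas the paper re-applies part (2) afresh at each inductive step to recover the counting; your explicit exclusion of cases (ii) and (iv) is in fact a bit more transparent than the paper's implicit handling of the same point.
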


\begin{proof}
Note that $z_1=x_1$ and so $1\to z_1\to z_2\to \cdots\to z_d$ is a path in $\Gamma(V)$ by Lemma \ref{Lemma-chain-Lyndon}. Also there are no paths of length $p>d$ in $\Gamma(V)$ started at $1$ by the part (3) of Lemma \ref{Lemma-chain-extending}. Assume that $1\to v_1\to v_2\to \cdots \to v_d$ is a path in $\Gamma(V)$. Write $\sh(v_1v_2)=(u_1,\bar{u}_2)$. Then $u_1=z_1$ by the part (2) of Lemma \ref{Lemma-chain-extending} and so it is a letter. Hence $v_1=z_1$. Apply the part (1) of Lemma \ref{Lemma-chain-extending} on the path $z_1\to v_2\to v_3$ with $u_1'=u_1=z_1$ and $\bar{u}_2=v_2$, one gets $u_2=v_2t_2$ and $u_2\bar{u}_3=v_2v_3$. Thus $u_2=z_2$ by applying the part (2) of Lemma \ref{Lemma-chain-extending} on the path $v_2\to v_3\to \cdots \to v_p$. Since $z_2\geq_{\rm lex} v_2$, one has $v_2=z_2$. Now by induction suppose $v_j=z_j$ for $j\leq i$, where $i\geq2$. Then apply the part (1) of Lemma \ref{Lemma-chain-extending} on the path $z_{i-1}\to z_i \to v_{i+1}$ with $u_{i-1}'=u_{i-1}=z_{i-1}$ and $\bar{u}_i=z_i$, then $u_i >_{\rm lex} \bar{u}_{i+1}$, $v_{i+1} = t_i\bar{u}_{i+1}$ and $u_i'=u_i=z_it_i$.  If $t_i\neq 1$, the part (2) of Lemma \ref{Lemma-chain-extending} for the path $z_i\to v_{i+1}\to \cdots\to v_p$ gives that $z_i>_{\rm lex}u_i >_{\rm lex} u_{i+1}>_{\rm lex}\cdots >_{\rm lex} u_d$ and thus there are $d-i+1$ elements in $U$ that is less than $z_i$, which is impossible. So $t_i=1$ and hence $z_i=u_i, v_{i+1}=\bar{u}_{i+1}$. Note that $\bar{u}_{i+1}$ is a prefix of $u_{i+1}$ in this case and hence $z_i>_{\rm lex} \bar{u}_{i+1} \geq_{\rm lex} u_{i+1} >_{\rm lex} u_{i+2}>_{\rm lex}\cdots >_{\rm lex} u_d$. Therefore, one has  $v_{i+1} = \bar{u}_{i+1} = u_{i+1} = z_{i+1}$. The last statement is obtained by definition.
\end{proof}

Now we turn to the main result of this section, which provides an efficient way to calculate invariants of certain graded algebras.

\begin{theorem}\label{Theorem-invariant-algebra}
Let $U$ be a closed set of Lyndon words and let $A=k\langle X\rangle/(G)$, where $G$ is a set of homogeneous polynomials such that $\Phi(U)\subseteq \lw(G) \subseteq \mathbb{L}\backslash U$. If $G$ is a Gr\"{o}bner set then one has:
\begin{enumerate}
\item The set of obstructions of $A$ is $\Phi(U)$.
\item ${\rm Irr}(G) = \{u_1u_2\cdots u_s\,|\,u_1\leq_{\rm lex} u_2\leq_{\rm lex} \cdots\leq_{\rm lex} u_s \in U,\ s\geq1\}\cup \{1\}$.
\item
The Hilbert series of $A$ is $H_A(t)=\prod_{u\in U}\big(1-t^{\deg(u)}\big)^{-1}$ and $\gkdim A=\#(U)$.
\item The global dimension of $A$ satisfies $\gldim(A)\leq \#(\Phi(U))+1$.
\item If $U$ is a finite set then $\gldim A = \#(U)$ and the last nonzero term of the minimal free resolution of $k_A$ is $A(-l)$, where $l=\sum_{u\in U}\deg(u)$.
\end{enumerate}
\end{theorem}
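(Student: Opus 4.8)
The plan is to deal with the five assertions in turn. Parts $(1)$--$(3)$ are combinatorial, flowing from the inclusions $\Phi(U)\subseteq\lw(G)\subseteq\mathbb{L}\setminus U$ together with the Diamond Lemma, while $(4)$ and $(5)$ rest on a chain-built free resolution of $k_A$. The elementary fact underlying $(1)$ and $(2)$ is that every $w\in\mathbb{L}\setminus U$ has a factor in $\Phi(U)$: choosing a Lyndon word $w_0$ of smallest length among the factors of $w$ that lie outside $U$, minimality forces $w_0$ to have no proper factor in $\mathbb{L}\setminus U$, i.e. $w_0\in\Phi(U)$. Combined with $\Phi(U)\subseteq\lw(G)\subseteq\mathbb{L}\setminus U$, this shows that a word has a factor in $\lw(G)$ if and only if it has a factor in $\Phi(U)$, so ${\rm Irr}(G)$ is precisely the set of words avoiding $\Phi(U)$. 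For $(1)$: every $u\in\Phi(U)$ is an obstruction, since $u\in\lw(G)$ while $\Phi(U)$ is an antichain and no element of $\lw(G)\subseteq\mathbb{L}\setminus U$ can be a proper factor of $u\in\Phi(U)$; conversely an obstruction $u$ lies in $\lw(G)\subseteq\mathbb{L}\setminus U$, and if $u\notin\Phi(U)$ it would have a proper factor in $\mathbb{L}\setminus U$, hence, by the fact just recorded, a proper factor in $\Phi(U)\subseteq\lw(G)$, contradicting that $u$ is an obstruction.

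For $(2)$, take $m\neq 1$ with its Lyndon decomposition $m=w_1\cdots w_r$, $w_1\le_{\rm lex}\cdots\le_{\rm lex}w_r$ (Proposition \ref{proposition-facts-Lyndon}(4)); by Proposition \ref{proposition-facts-Lyndon}(5) every Lyndon factor of $m$ is a factor of some $w_i$, so $m$ avoids $\Phi(U)$ iff each $w_i$ does, iff each $w_i\in\Psi(\Phi(U))=U$, and uniqueness of the Lyndon decomposition turns this into the asserted description of ${\rm Irr}(G)$. For $(3)$, the Diamond Lemma (Proposition \ref{Proposition-Diamond-Lemma}) gives that ${\rm Irr}(G)$ is a $k$-basis of $A$, hence $H_A(t)=\sum_{m\in{\rm Irr}(G)}t^{\deg(m)}$; by $(2)$ this basis is exactly the monomial basis of the polynomial ring on commuting variables indexed by $U$, the variable of $u$ sitting in degree $\deg(u)$, so $H_A(t)=\prod_{u\in U}(1-t^{\deg(u)})^{-1}$, a well-defined power series since only finitely many elements of $U$ have a given degree. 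When $\#(U)=d<\infty$ the pole of $H_A$ at $t=1$ has order $d$, whence $\gkdim A=d=\#(U)$; when $U$ is infinite, comparing coefficientwise with finite subproducts gives $\gkdim A=\infty=\#(U)$.

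For $(4)$ and $(5)$ I would pass to the free resolution of $k_A$ built from the chains on the obstruction set (Anick's resolution): since $G$ is a Gr\"obner set for the admissible order $<_{\rm deglex}$ and, by $(1)$, the obstructions of $A$ are exactly $\Phi(U)$, the complex $\cdots\to A\otimes kC_p(\Phi(U))\to\cdots\to A\otimes kC_1(\Phi(U))\to A\to k_A\to 0$ is exact; in particular $\gldim A=\mathrm{pd}_A k_A\le\sup\{p\,|\,C_p(\Phi(U))\neq\emptyset\}$ and $\mathrm{Tor}^A_p(k,k)$ is a subquotient of $kC_p(\Phi(U))$ for every $p$. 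To deduce $(4)$ it then suffices to show that a $p$-chain supplies $p-1$ pairwise distinct elements of $\Phi(U)$ -- namely the unique suffix-obstructions $w_i\in\Phi(U)$ of the products $v_iv_{i+1}$ along its defining path $1\to v_1\to\cdots\to v_p$; I expect this distinctness statement to be the main obstacle, since it is where the Lyndon structure is genuinely used (via Lemmas \ref{Lemma-Shirshov-factor-dichotomy}, \ref{Lemma-chain-extending} and \ref{Lemma-chain-Lyndon}, tracking how each arrow forces a strictly smaller element of $U$) and it fails for arbitrary monomial relations such as those of $k[x]/(x^2)$, which the hypothesis excludes precisely because $x^2$ is not a Lyndon word. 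Granting it, $\sup\{p\,|\,C_p(\Phi(U))\neq\emptyset\}\le\#(\Phi(U))+1$, which is $(4)$.

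Finally, for $(5)$, assume $U=\{z_1>_{\rm lex}\cdots>_{\rm lex}z_d\}$ is finite. By Proposition \ref{proposition-chain-unique}, $C_d(\Phi(U))=\{z_1z_2\cdots z_d\}$ consists of a single word, of degree $l=\sum_{u\in U}\deg(u)$, and $C_p(\Phi(U))=\emptyset$ for $p>d$; hence the resolution above has length $\le d$, so $\gldim A\le d$. For the reverse inequality I would compare the coefficient of $t^{l}$ in the identity $\sum_p(-1)^pH_{\mathrm{Tor}^A_p(k,k)}(t)=H_A(t)^{-1}=\prod_{u\in U}(1-t^{\deg(u)})$, which is valid because the minimal free resolution of $k_A$ is now finite: on the right the coefficient is $(-1)^d$, the only subset of $U$ of degree-sum $l$ being $U$ itself; on the left, for $p<d$ every $p$-chain $u_1u_2'\cdots u_{p-1}'u_p$ (Lemma \ref{Lemma-chain-extending}(3), with $u_i'$ a suffix of $u_i$) has degree at most the sum of the $p$ largest degrees of elements of $U$, which is $<l$, so $\mathrm{Tor}^A_p(k,k)_l=0$ for $p<d$ and only the term $p=d$ survives, forcing $\dim_k\mathrm{Tor}^A_d(k,k)_l=1$. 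Therefore $\mathrm{pd}_A k_A=d=\#(U)=\gldim A$, and since $\mathrm{Tor}^A_d(k,k)$ is one-dimensional and concentrated in degree $l$, the last term $A\otimes_k\mathrm{Tor}^A_d(k,k)$ of the minimal free resolution of $k_A$ is $A(-l)$.
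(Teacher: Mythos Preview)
Your proposal is correct and follows the same overall architecture as the paper: parts (1)--(3) via the Diamond Lemma and Lyndon combinatorics, parts (4)--(5) via Anick's resolution and the chain structure on $\Phi(U)$.

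Two points of comparison are worth making. For (4), you flag the strict inequality $w_1>_{\rm lex}\cdots>_{\rm lex}w_{p-1}$ among the obstruction-factors as ``the main obstacle'' and plan to invoke Lemmas~\ref{Lemma-chain-extending} and~\ref{Lemma-chain-Lyndon}. In fact the paper treats this as immediate, and it is: $v_{i+1}$ is a proper suffix of the Lyndon word $w_i$, so $w_i>_{\rm lex}v_{i+1}$; the nonempty prefix $p$ of $w_{i+1}$ lying inside $v_{i+1}$ is then a proper suffix of $w_i$, whence $w_i>_{\rm lex}p\ge_{\rm lex}w_{i+1}$ (the last inequality because a proper prefix of a word is $>_{\rm lex}$ the word in the order used here). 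No machinery beyond Proposition~\ref{Proposition-character-Lyndon} is needed. For (5), you take a slightly different route from the paper. The paper observes directly that since $C_d(\Phi(U))$ sits in degree $l$ while every element of $C_{d-1}(\Phi(U))$ has degree $<l$ (Lemma~\ref{Lemma-chain-extending}(3)), the Anick differential $\delta_d\otimes_A k$ vanishes for degree reasons, giving $\tor^A_d(k,k)=kC_d=k(-l)$ at once. Your Euler-characteristic detour reaches the same conclusion and is perfectly valid, but it uses the same degree bound on $(d-1)$-chains and then adds an extra step; the paper's argument is the more economical of the two.
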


\begin{proof}
(1)
By the definition, the set of obstructions of $A$ coincides with the set $$\{u\in \lw(G)\,|\,u \text{ has no proper factor in }\lw(G)\},$$
which equals to $\Phi(U)$.

(2)
Note that $U=\Psi(\lw(G))$. Apply the parts $(4),(5)$ of Proposition \ref{proposition-facts-Lyndon}, one gets the result.

(3)
It is an easy combinatoric exercise to see $H_A(t)=\prod_{u\in U}\big(1-t^{\deg(u)}\big)^{-1}$. Then by \cite[Corollary 2.2]{SZ} one obtains $\gkdim A=\#(U)$.

(4)
Assume that $\Phi(U)$ is a finite set. Suppose $1\to v_1\to v_2\to \cdots \to v_p$ is a path in the graph $\Gamma(\Phi(U))$. Let $w_i$ be the unique factor of $v_iv_{i+1}$ in $\Phi(U)$ for $i=1,\cdots, p-1$. Immediately one has $w_1>_{\rm lex}w_2>_{\rm lex}\cdots >_{\rm lex}w_{p-1}$. Thus $p-1\leq \#(\Phi(U))$. Therefore there is no $p$-chain on $\Phi(U)$ if $p > \#(\Phi(U))+1$. Thus \cite[Theorem 1.5]{An2} gives that $\gldim (A) =\text{pd}(k_A) \leq \#(\Phi(U))+1$.

(5)
Let $\#(U)=d$ and let $0\to F_d\to F_{d-1} \to \cdots \to F_1\to A\to k\to 0$ be the minimal free resolution of $k_A$. Consider the Anick's resolution $\mathcal{E}$ of $k_A$:
\begin{eqnarray*}
&&\cdots \to kC_p\otimes A\to \cdots \to kC_d\otimes A\xrightarrow{\delta_d} kC_{d-1}\otimes A \to \cdots \to kC_1\otimes A \to A\to k_A\to 0,
\end{eqnarray*}
where $C_i= C_i(\Phi(U))$. By Proposition \ref{proposition-chain-unique}, one has $kC_p=0$ for any $p>d$ and $kC_d \cong k(-l)$, where $l=\sum_{u\in U}\deg(u)$. Also by the part (3) of Lemma \ref{Lemma-chain-extending}, one has $kC_{d-1} \cong \bigoplus_{c\in C_{d-1}}k(-r_c)$ with $r_c < l$ for each $c\in C_{d-1}$. Thus in the complex $\mathcal{E}\otimes_A k$, the differential $\delta_d\otimes_A k=0$ and hence $\tor^A_d(k,k)=k(-l)$ and $\tor^A_p(k,k)=0$ for any $p>d$. The result now follows from the facts $\gldim A = \sup \{ n\,|\, \tor^A_n(k_A,{}_Ak)\neq0\}$ and $F_p\cong \tor^A_{p}(k_A,{}_Ak)\otimes A$.
\end{proof}

\begin{lemma}\label{Lemma-bound-Gorenstein-parameter}
Let $U$ be a finite closed set of Lyndon words on $X=\{x_1,\cdots,x_n\}$ with $\#(U)=d$. Assume that $\deg(x_1)=\cdots=\deg(x_n)=1$, then
$$
\sum_{u\in U} \deg(u) \leq \frac{\phi^{d-n+4}-\psi^{d-n+4}}{\phi-\psi}+n-3, \quad \text{ where }\, \phi=\frac{1+\sqrt{5}}{2},\, \psi=\frac{1-\sqrt{5}}{2}.
$$
\end{lemma}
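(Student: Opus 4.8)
The plan is to expose the two-term recursion hidden in the Shirshov factorization and then compare with the Fibonacci numbers. Since $\deg(x_j)=1$ for every $j$, we have $\deg(u)=l(u)$ for all $u$, so it suffices to bound $\sum_{u\in U}l(u)$. Throughout, write $F_1=F_2=1$ and $F_m=F_{m-1}+F_{m-2}$ for $m\ge3$, so that $F_m=(\phi^m-\psi^m)/(\phi-\psi)$ by Binet's formula; the target is then $\sum_{u\in U}l(u)\le F_{d-n+4}+n-3$. The case $n=1$ is trivial ($U=X$, $d=1$, and $1\le F_4-2$), so assume $n\ge2$.

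First I would list $U=\{u_1,\dots,u_d\}$ with $l(u_1)\le l(u_2)\le\cdots\le l(u_d)$. Because $U$ is closed it contains all $n$ letters, each of length $1$, while every non-letter Lyndon word has length $\ge 2$; hence $l(u_1)=\cdots=l(u_n)=1$ and $u_{n+1},\dots,u_d$ are all non-letters.

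The heart of the argument is the recursion. Fix $i$ with $n<i\le d$ and take the Shirshov factorization $\sh(u_i)=(u_i',u_i'')$. By the definition of $\sh$ and Proposition \ref{proposition-facts-Lyndon}(2), both $u_i'$ and $u_i''$ are Lyndon words, with $u_i'$ a proper prefix and $u_i''$ a proper suffix of $u_i$, so $l(u_i')+l(u_i'')=l(u_i)$ and both summands are $<l(u_i)$. Since $U$ is closed, $u_i',u_i''\in U$; moreover $u_i'\neq u_i''$, for otherwise $u_i=(u_i')^2$ would be a proper power, which no Lyndon word is. Thus $u_i'=u_j$ and $u_i''=u_k$ for distinct indices $j,k<i$ (they lie strictly before $u_i$ in the length ordering), and taking $j<k$ gives $l(u_i')\le l(u_{i-2})$ and $l(u_i'')\le l(u_{i-1})$. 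Hence
\[
l(u_i)\le l(u_{i-1})+l(u_{i-2}),\qquad n<i\le d .
\]

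Finally I would compare with the Fibonacci sequence and sum. Put $b_i=1$ for $1\le i\le n$ and $b_i=b_{i-1}+b_{i-2}$ for $n<i\le d$; one checks at once that $b_{n+j}=F_{j+2}$ for $0\le j\le d-n$, and a straightforward induction on $i$ using the displayed inequality gives $l(u_i)\le b_i$ for every $i$. Therefore, invoking $\sum_{k=1}^{N}F_k=F_{N+2}-1$,
\[
\sum_{u\in U}\deg(u)=\sum_{i=1}^{d}l(u_i)\le\sum_{i=1}^{d}b_i=n+\sum_{k=3}^{d-n+2}F_k=n+F_{d-n+4}-3 ,
\]
which is exactly the asserted estimate. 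The only place that requires care is the recursion step — concretely, the observation that $u_i'\neq u_i''$ (equivalently, that a Lyndon word is never a square) — while the remaining manipulations are routine bookkeeping with Fibonacci identities. As a check, the bound is sharp: for the closed set $U_p=\{f_0,\dots,f_{p-1}\}$ of Fibonacci words (Example \ref{Example-Fibonacci}) one has $l(u_i)=b_i$ for all $i$, so equality holds.
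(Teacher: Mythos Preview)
Your proof is correct and follows essentially the same route as the paper's. The paper enumerates $U$ in deglex order (which, under $\deg(x_j)=1$, refines your length order) and then invokes \cite[Proposition~7.3]{GF} for the key inequality $\deg(g_i)\le a_i$; you instead prove that inequality directly via the Shirshov factorization, using that $u_i',u_i''\in U$ are distinct proper factors and hence occupy two earlier slots in the enumeration. The remaining Fibonacci bookkeeping is identical.
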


\begin{proof}
Denote $a_i$ the $i$-th Fibonacci number and  $s_i=\sum_{j=0}^ia_j$. Then, by induction, $s_i=a_{i+2}-1$. Enumerating elements in $U$ in deglex order as
$$
x_1 \leq_{\rm deglex} \cdots \leq_{\rm deglex} x_{n-2} \leq_{\rm deglex} x_{n-1}=g_0\leq_{\rm deglex} x_n=g_1\leq_{\rm deglex} g_2 \leq_{\rm deglex} \cdots \leq_{\rm deglex} g_{d-n+1},
$$
then $\deg(g_i)\leq a_i$ for $i=0,\cdots,d-n+1$ as showed in \cite[Proposition 7,3]{GF}. Therefore
$$\sum_{u\in U} \deg(u) \leq s_{d-n+1} + n-2 = a_{d-n+3}+n-3 = \frac{\phi^{d-n+4}-\psi^{d-n+4}}{\phi-\psi}+n-3,$$
where the last equality follows from the general formula $a_i=\frac{\phi^{i+1}-\psi^{i+1}}{\phi-\psi}$ for $i\geq0$.
\end{proof}

Combine the parts (3), (5) of Theorem \ref{Theorem-invariant-algebra} and Lemma \ref{Lemma-bound-Gorenstein-parameter}, we get:

\begin{corollary}\label{Corollary-bound-Gorenstein-parameter}
Let $A$ be an AS-regular algebra of global dimension $d$ and of Gorenstein parameter $l$ which is generated in degree one with $\dim A_1=n$. If all obstructions of $A$ with respect to some choice of a basis $x_1,\cdots,x_n$ of $A_1$ are Lyndon words, then $\gkdim A=d$ and $l\leq \frac{\phi^{d-n+4}-\psi^{d-n+4}}{\phi-\psi}+n-3.$
\end{corollary}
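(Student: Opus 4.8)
The plan is to present $A$ over the given basis, identify its obstruction set with $\Phi(U)$ for a suitable closed set $U$ of Lyndon words, and then read off both assertions from Theorem \ref{Theorem-invariant-algebra} and Lemma \ref{Lemma-bound-Gorenstein-parameter}. First I would fix the basis $x_1,\dots,x_n$ of $A_1$ for which the obstructions are Lyndon words, order it by $x_1<\cdots<x_n$, and write $A=k\langle X\rangle/\mathfrak{a}$ with $X=\{x_1,\dots,x_n\}$ and $\deg(x_i)=1$; since $x_1,\dots,x_n$ map to a basis of $A_1$, the ideal $\mathfrak{a}$ is homogeneous and contains no nonzero element of degree $\leq 1$. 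Let $G_0$ be the unique reduced Gr\"obner set generating $\mathfrak{a}$. I claim $\lw(G_0)$ is exactly the obstruction set of $A$: reducedness forces, for each $f\in G_0$, that $\lw(f)$ be irreducible modulo $G_0\backslash\{f\}$ and that leading words of distinct elements of $G_0$ be distinct, so $\lw(f)$ has no proper factor in $\lw(G_0)$ and is thus an obstruction; conversely, by Proposition \ref{Proposition-Diamond-Lemma} the set $\lw(\mathfrak{a})$ consists of the words having a factor in $\lw(G_0)$, so an obstruction, having no proper factor in $\lw(\mathfrak{a})$, must itself lie in $\lw(G_0)$.

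By hypothesis $\lw(G_0)\subseteq\mathbb{L}$; it contains no letter, since a degree-one leading word would contradict $\dim A_1=n$; and it is an antichain by the definition of obstruction. Hence $\lw(G_0)\in\mathcal{Z}$, and putting $U:=\Psi(\lw(G_0))$ gives $U\in\mathcal{Y}$ with $X\subseteq U$ and, since $\Phi(\Psi(V))=V$ for antichains $V\in\mathcal{Z}$, $\Phi(U)=\lw(G_0)$. Moreover every word in $\lw(G_0)$ is a factor of itself, so $\Phi(U)=\lw(G_0)\subseteq\mathbb{L}\backslash U$. Thus $G:=G_0$ satisfies the hypotheses of Theorem \ref{Theorem-invariant-algebra}.

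Now I would invoke that theorem. Part $(3)$ gives $\gkdim A=\#(U)$; since $A$ is AS-regular, $\gkdim A<\infty$, so $U$ is finite, and part $(5)$ then yields $\gldim A=\#(U)$. Combined with the given $\gldim A=d$ this gives $\#(U)=d$ and $\gkdim A=d$, which is the first assertion of the corollary; part $(5)$ also identifies the last nonzero term of the minimal free resolution $F_\bullet\to k_A$ as the rank-one module $A(-l')$ with $l'=\sum_{u\in U}\deg(u)$. It remains to match $l'$ with the Gorenstein parameter $l$: applying $\mathrm{Hom}_A(-,A)$ to $F_\bullet$ and using (AS3), one gets that $\ext^d_A(k_A,A)=k(l)$ is the cokernel of $\mathrm{Hom}_A(F_{d-1},A)\to\mathrm{Hom}_A(F_d,A)=A(l')$; since $A(l')$ is generated in degree $-l'$ while $k(l)$ is concentrated in degree $-l$, a graded surjection between them forces $l'=l$. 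Hence $l=\sum_{u\in U}\deg(u)$.

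Finally, $U$ is a finite closed set of Lyndon words on $X=\{x_1,\dots,x_n\}$ with $\#(U)=d$ and all $\deg(x_i)=1$, so Lemma \ref{Lemma-bound-Gorenstein-parameter} gives $\sum_{u\in U}\deg(u)\leq\frac{\phi^{d-n+4}-\psi^{d-n+4}}{\phi-\psi}+n-3$, which together with $l=\sum_{u\in U}\deg(u)$ is the second assertion. Essentially everything here is bookkeeping built on the earlier results; the one step that is not purely formal is the identification $l=l'=\sum_{u\in U}\deg(u)$ of the Gorenstein parameter with the top shift in the minimal resolution of $k_A$ — this is the only place the Gorenstein condition (AS3) is used, and it needs a little care with the left/right module conventions in AS-Gorenstein duality together with the rank-one statement supplied by Theorem \ref{Theorem-invariant-algebra}$(5)$, while all other inputs come from (AS1), (AS2) and the Lyndon-obstruction hypothesis.
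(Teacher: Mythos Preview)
Your proposal is correct and follows exactly the route the paper indicates: set up a presentation with reduced Gr\"obner basis $G_0$, recognize the obstruction set as $\Phi(U)$ for the closed set $U=\Psi(\lw(G_0))$, and then combine parts (3) and (5) of Theorem~\ref{Theorem-invariant-algebra} with Lemma~\ref{Lemma-bound-Gorenstein-parameter}. The paper's own proof is the single sentence ``Combine the parts (3), (5) of Theorem~\ref{Theorem-invariant-algebra} and Lemma~\ref{Lemma-bound-Gorenstein-parameter}'', so you have simply unpacked what is implicit there---in particular the identification $l=l'$ of the Gorenstein parameter with the top shift, which is standard and which your cyclic-generator argument handles correctly (the generator of $A(l')$ in degree $-l'$ must map nontrivially to the nonzero quotient $k(l)$, forcing $-l'=-l$).
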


\begin{remark}
Note that AS-regular algebras of which the obstructions are Lyndon words is a context more general than the universal enveloping algebras of positively graded Lie algebras. Thus Corollary \ref{Corollary-bound-Gorenstein-parameter} can be thought of as a counterpart of \cite[Proposition 2.4]{FV}. However, we are unable to decide whether or not the given estimation is optimal. Indeed, suppose that $\mathfrak{g}$ is a graded Lie algebra of dimension $d$ which is generated in degree one with $\dim \mathfrak{g}_1=n$. Then alike the discussion in \cite[Proposition 2.4]{FV}, the highest possible Gorenstein parameter of the universal enveloping algebra $U(\mathfrak{g})$ is
\begin{align*}
\binom{d-n+2}{2}+n-1 &= (1+1+2+\cdots+d-n+1) + n-2\\
                     &\leq(a_0+a_1+a_2+\cdots+a_{d-n+1})+n-2\\
                     &=\frac{\phi^{d-n+4}-\psi^{d-n+4}}{\phi-\psi}+n-3.
\end{align*}
It is interesting to find AS-regular algebras which is generated in degree one and of which the Gorenstein parameter is larger than $\binom{d-n+2}{2}+n-1$. Note that the only examples known to us are \cite[Example 4.5]{ZL}.
\end{remark}

\section{Construction of Artin-Schelter regular algebras from Lyndon words}

In this section we turn to the construction of AS-regular algebras from Lyndon words. The major tool is the generalized bracketing on Lyndon words defined inductively by Shirshov factorization. Throughout this section, $X=\{x_1,\cdots, x_n\}$, $q=[q_{i,j}]_{n\times n}$ is a matrix with nonzero entries in $k$. Also $u,v,w$ denote words on $X$.

Let $q_{u,1}=q_{1,u}=1$, then define $q_{x_i,x_ju}=q_{i,j}q_{x_i,u}$ and $q_{ux_i,v}=q_{u,v}q_{x_i,v}$ inductively. Obviously, the following equalities hold:
\begin{equation*}
q_{u,vw}=q_{u,v}q_{u,w},\quad q_{uv,w}=q_{u,v}q_{v,w}.
\end{equation*}
Let $[-,-]: k\langle X\rangle \otimes k\langle X\rangle\to k\langle X\rangle$ be the bilinear operation given by $[u,v]=uv-q_{u,v}vu$. Then
\begin{align*}
[[u,v],w] &= [u,[v,w]]-q_{u,v} v[u,w]+q_{v,w}[u,w]v, \\
[uv,w] &= u[v,w] + q_{v,w}[u,w]v,  \\
[u,vw] &= [u,v]w + q_{u,v} v[u,w].
\end{align*}
Now set $[x_i]=x_i$ for letters $x_i\in X$. If $u$ is a Lyndon word with $l(u)>1$ and $\sh(u)=(u',u'')$, define
\begin{equation*}
[u] := [[u'],[u'']].
\end{equation*}
Note that, inductively, $[u]$ is a monic homogeneous polynomial with $\lw([u])=u$ and all words occur in $[u]$ have the same constitute. In particular $[u]=[u'][u'']-q_{u',u''}[u''][u']$ if $\sh(u)=(u',u'')$.

\begin{definition}
A polynomial of the form $[u]$, where $u$ is a Lyndon word, is called a \emph{super-letter}. A finite product of super-letters is called a \emph{super-word}.
\end{definition}

Obviously, if $D=[u_1]\cdots[u_r]$ is a super-word, then $D$ is a monic homogeneous polynomial with $\lw(D)=u_1\cdots u_r$ and every word occurs in $D$ has the same constitute with $u_1u_2\cdots u_r$. Moreover, super-letters are in one-to-one correspondence with Lyndon words, and every super-word has a unique factorization in super-letters \cite[Lemma 2.5]{He}. Thus one can define a total order on the set of all super-letters by
$$
[u]> [v]\;\; \text{ iff }\;\; u>_{\rm lex}v,
$$
and it then extends to a lexicographical order on the set of super-words in a natural way.

\begin{definition}
A super-word $D=[u_1][u_2]\cdots [u_r]$ is called \emph{monotonic} if $[u_1]\leq [u_2]\leq \cdots \leq [u_r]$.
\end{definition}

It is easy to see that all monotonic super-words form a basis of $k\langle X\rangle$ by the part (4) of Proposition \ref{proposition-facts-Lyndon}. Moreover, if $D=[u_1]\cdots[r_r]$ and $D'=[u'_1]\cdots[u'_s]$ are monotonic super-words, then $D>_{\rm lex}D'$ iff $u_1\cdots u_r >_{\rm lex} u'_1\cdots u'_s$ \cite[Lemma 5]{Kh}. Thus
with the part (4) of Proposition \ref{proposition-facts-Lyndon} at hand, the lexicographical order on super-words is compatible with that on ordinary words.

In the sequel, we will associate to each tuple $I=(i_1,\cdots,i_r)$ a new variable $x_I=x_{(i_1,\cdots,i_r)}$. We will frequently write $x_I=x_{i_1i_2\cdots i_r}$ if there is no risk of confusions. In particular, we identify $x_{(i)}=x_i$ for $i=1, \cdots, n$. For any Lyndon word $u=x_{i_1}\cdots x_{i_r}$ let $\rho(u)=(i_1,\cdots, i_r)$, and for any closed set $U$ of Lyndon words let $$
X_U=\{x_{\rho(u)}\,|\,u\in U\}.
$$
The degree function on $X_U^*$ is given by assigning $x_{\rho(u)}$ to the number $\deg(u)$. Clearly $X\subseteq X_U$ and $k\langle X\rangle$ is a graded subalgebra of $k\langle X_U\rangle$. The total order on $X$ extends to a total order on $X_U$ by saying $x_{\rho(u)}> x_{\rho(v)}$ iff $u>_{\rm lex}v$ for any $u,v\in U$. Thus the lex order and the deglex order on $X_U^*$ are extensions of that on $X^*$ respectively.

Define $[-,-]:k\langle X_U\rangle \otimes k\langle X_U\rangle \to k\langle X_U\rangle$ to be the linear map by
$$
[D,D']=DD'-q_{u_1\cdots u_r, v_1\cdots v_s}D'D,
$$
where $D=x_{\rho(u_1)}\cdots x_{\rho(u_r)},\, D'=x_{\rho(v_1)}\cdots x_{\rho(v_s)}$. It is an extending of the linear map $[-,-]:k\langle X\rangle \otimes k\langle X\rangle \to k\langle X\rangle$. Let
$
\beta=\beta_U:k\langle X_U\rangle \to k\langle X\rangle\
$
be the homomorphism of graded algebras given by $\beta(x_{\rho(u)})=[u]$ for each $u\in U$. Note that $
\beta|_{k\langle X\rangle}=\id_{k\langle X\rangle}.
$

For any $u\in U\backslash X$ with $\sh(u)=(v,w)$, we denote
$$
f_u=[x_{\rho(v)},x_{\rho(w)}] - x_{\rho(u)}.
$$
Then a simple induction on the length of $u\in U$ implies that
$
[u]-x_{\rho(u)}\in (F_U), \text{ where } F_U=\{f_u\,|\, u\in U\backslash X\}.
$
Therefore, for any set $G$ of homogeneous polynomials, $\beta$ induces an isomorphism of graded algebras
$$
k\langle X_U\rangle/(F_U\cup G) \xrightarrow{\cong} k\langle X\rangle/(G).
$$
The inverse is the homomorphism induced by the injection map $k\langle X\rangle \subseteq k\langle X_U\rangle$. In particular, when $G=\emptyset$, one gets
$k\langle X_U\rangle/(F_U)\cong k\langle X\rangle$.

\begin{lemma}\label{Lemma-calcultion}
Let $U$ be a closed set of Lyndon words. For each $v\in \bar{\Phi}(U)$ we associate with a homogeneous polynomial $g_v\in k\langle X\rangle$ of degree
$\deg(v)$ that is a linear combination of super-words $[u_1]\cdots [u_r]<_{\rm lex}[v]$ with $u_1,\cdots, u_r\in U$.
Let $G=\{\overline{g}_v=[v]-g_v\,|\,v\in \bar{\Phi}(U)\}$. Then for each pair $u>_{\rm lex}u'\in U$, there is a homogeneous polynomial $h_{u,u'}\in k\langle X_U\rangle$ of degree $\deg(uu')$ such that
\begin{enumerate}
\item
$h_{u,u'}$ is a linear combination of lexicographical lesser words than $x_{\rho(u)}x_{\rho(u')}$,
\item $(H)=(F_U\cup G)$, where $H=\{\bar{h}_{u,u'}=[x_{\rho(u)},x_{\rho(u')}]-h_{u,u'}\,|\,u>_{\rm lex}u'\in U\}$.
\end{enumerate}
Moreover, $k\langle X\rangle/(G)\cong k\langle X_U\rangle/(H)$ as graded algebras and the following statements are equivalent:
\begin{enumerate}
\item[(i)] $G$ is a Gr\"{o}bner set with respect to the deglex order on $X^*$.
\item[(ii)] $H$ is a Gr\"{o}bner set with respect to the deglex order on $X_U^*$.
\item[(iii)] $J(u,v,w) = [h_{u,v},x_{\rho(w)}] - [x_{\rho(u)}, h_{v,w}] + q_{u,v} x_{\rho(v)}h_{u,w} - q_{v,w} h_{u,w}x_{\rho(v)}$ is trivial modulo $H$ with respect to the deglex order on $X_U^*$ for each triple $u>_{\rm lex}v>_{\rm lex}w \in U$.
\end{enumerate}
\end{lemma}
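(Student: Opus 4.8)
The plan is to route everything through the graded algebra epimorphism $\beta=\beta_U\colon k\langle X_U\rangle\to k\langle X\rangle$, which has $\ker\beta=(F_U)$, restricts to the identity on $k\langle X\rangle$, satisfies $\beta(x_{\rho(w)})=[w]$, and induces the graded isomorphism $k\langle X_U\rangle/(F_U\cup G)\xrightarrow{\cong}k\langle X\rangle/(G)$; in particular $(F_U\cup G)=\beta^{-1}((G))$. Since $\beta([x_{\rho(u)},x_{\rho(u')}])=[[u],[u']]$, the whole problem reduces to describing the class of $[[u],[u']]$ in $k\langle X\rangle/(G)$. Two facts will be used repeatedly. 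First, if $u>_{\rm lex}u'$ are Lyndon words then $uu'$ is a Lyndon word (Proposition \ref{proposition-facts-Lyndon}(1)). Second, $\lw(G)=\bar{\Phi}(U)$ (because $\lw([v])=v$ dominates $\lw(g_v)$), so, using Proposition \ref{proposition-facts-Lyndon}(4),(5) and the identity $\Psi\circ\bar{\Phi}=\id$, a word is irreducible modulo $G$ exactly when its Lyndon decomposition lies in $U$; thus ${\rm Irr}(G)=\{u_1\cdots u_s\mid u_1\leq_{\rm lex}\cdots\leq_{\rm lex}u_s\in U\}\cup\{1\}$, and since $\lw(\bar h_{u,u'})=x_{\rho(u)}x_{\rho(u')}$ once (1) is established, ${\rm Irr}(H)$ is the corresponding set of monotonic words over $X_U$.

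The technical core is a spanning lemma: \emph{the classes of the monotonic super-words over $U$ span $k\langle X\rangle/(G)$, and for every $w\in{\rm Irr}(G)$ the class $\bar w$ is a linear combination of classes of monotonic super-words over $U$ with leading word $\leq_{\rm deglex}w$.} I would prove this by induction on $w$ in the deglex order: with Lyndon decomposition $w=u_1\cdots u_s$ ($u_i\in U$) one has $[u_1]\cdots[u_s]=w+(\text{a combination of words }<_{\rm deglex}w)$, because every word occurring in a super-letter $[u_i]$ is $\leq_{\rm lex}u_i$ and of the same constitute; reducing those smaller words to ${\rm Irr}(G)$-form and invoking the inductive hypothesis exhibits $\bar w$ as $\overline{[u_1]\cdots[u_s]}$ plus a combination of monotonic super-words over $U$ with strictly smaller leading word, and ${\rm Irr}(G)$ always spans $k\langle X\rangle/(G)$. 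To construct $h_{u,u'}$, expand $[[u],[u']]=[uu']+(\text{monotonic super-words with leading word }<_{\rm deglex}uu')$ in the monotonic super-word basis of $k\langle X\rangle$ and apply the spanning lemma termwise after passing to $k\langle X\rangle/(G)$; this presents the class of $[[u],[u']]$ as a combination of monotonic super-words over $U$, each with leading word $\leq_{\rm deglex}uu'$, and with leading word $=uu'$ only for the summand $[uu']$ when $uu'\in U$ (note $uu'$ need not lie in $U\cup\bar{\Phi}(U)$, so the general spanning lemma is genuinely needed). A leading-word comparison gives the key point: a monotonic super-word $[w_1]\cdots[w_r]$ over $U$ of constitute ${\rm const}(uu')$ with leading word $<_{\rm deglex}uu'$ must satisfy $w_1<_{\rm lex}u$, so its word in $X_U^*$ begins with a letter $<x_{\rho(u)}$ and is $<_{\rm lex}x_{\rho(u)}x_{\rho(u')}$; and $x_{\rho(uu')}<x_{\rho(u)}$ when $uu'\in U$. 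Reading off coefficients yields $h_{u,u'}$ satisfying (1) and $\beta(h_{u,u'})\equiv[[u],[u']]\pmod{(G)}$, whence $\bar h_{u,u'}\in\beta^{-1}((G))=(F_U\cup G)$, i.e. $(H)\subseteq(F_U\cup G)$.

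For the reverse inclusion, use that $[[v'],[v'']]=[v]$ whenever $\sh(v)=(v',v'')$. Thus for $u\in U\setminus X$ with $\sh(u)=(v',v'')$ the construction returns $h_{v',v''}=x_{\rho(u)}$ — the single super-letter $[u]$, with $u\in U$, already being a monotonic super-word over $U$ — so $f_u=\bar h_{v',v''}\in H$, giving $F_U\subseteq(H)$; and for $v\in\bar{\Phi}(U)$ with $\sh(v)=(v',v'')$ the construction returns the rewriting $\hat g_v$ of $g_v$ over $X_U$, so $\bar g_v$ and $\bar h_{v',v''}$ coincide modulo $(F_U)$ and hence $\bar g_v\in(H)$. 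Therefore $F_U\cup G\subseteq(H)$, so $(H)=(F_U\cup G)$, and $k\langle X\rangle/(G)\cong k\langle X_U\rangle/(F_U\cup G)=k\langle X_U\rangle/(H)$ as graded algebras.

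For (i)$\Leftrightarrow$(ii), the degree-preserving bijection $u_1\cdots u_s\leftrightarrow x_{\rho(u_1)}\cdots x_{\rho(u_s)}$ identifies ${\rm Irr}(G)$ with ${\rm Irr}(H)$ in each degree, while $k\langle X\rangle/(G)\cong k\langle X_U\rangle/(H)$ is graded; since for a homogeneous set one has $\dim_k(\cdot)_m\leq\#{\rm Irr}(\cdot)_m$ with equality in all degrees exactly when the set is a Gr\"obner set (Proposition \ref{Proposition-Diamond-Lemma}), comparing the two chains forces $G$ Gr\"obner $\Leftrightarrow$ $H$ Gr\"obner. For (ii)$\Leftrightarrow$(iii), the ambiguities of $H$ are precisely the overlaps producing $x_{\rho(u)}x_{\rho(v)}x_{\rho(w)}$ with $u>_{\rm lex}v>_{\rm lex}w\in U$, with composition $S_{u,v,w}=\bar h_{u,v}x_{\rho(w)}-x_{\rho(u)}\bar h_{v,w}$; substituting $h_{a,b}=[x_{\rho(a)},x_{\rho(b)}]-\bar h_{a,b}$ into $J(u,v,w)$ and cancelling the bracket terms by the $q$-Jacobi identity of the excerpt gives
\begin{equation*}
J(u,v,w)=-S_{u,v,w}+q_{uv,w}\,x_{\rho(w)}\bar h_{u,v}-q_{u,vw}\,\bar h_{v,w}x_{\rho(u)}-q_{u,v}\,x_{\rho(v)}\bar h_{u,w}+q_{v,w}\,\bar h_{u,w}x_{\rho(v)},
\end{equation*}
where each of the last four summands is of the form $r\bar h s$ with $r\lw(\bar h)s<_{\rm deglex}x_{\rho(u)}x_{\rho(v)}x_{\rho(w)}$; the standard manipulation of the triviality condition then shows, triple by triple, that $S_{u,v,w}$ is trivial modulo $H$ iff $J(u,v,w)$ is. The main obstacle throughout is the construction of $h_{u,u'}$: not its existence, which the spanning lemma provides, but the control of leading words as one passes between $X^*$, $X_U^*$ and super-words, and in particular the verification that the reduced form of $[[u],[u']]$ modulo $(G)$ stays strictly below $x_{\rho(u)}x_{\rho(u')}$ — the point where the Lyndon-word combinatorics must be combined most carefully with the order-compatibility statements.
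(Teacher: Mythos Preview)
Your proof is correct and follows essentially the same route as the paper's. The paper outsources the construction of $c_{(u|u')}$ (your $\beta(h_{u,u'})$) to \cite[Lemma 3.6]{He}, whereas you supply it directly via your spanning lemma; the subsequent verification that $F_U\subseteq H$, that $(H)=(F_U\cup G)$ via $\beta$, and that (i)$\Leftrightarrow$(ii) via the bijection ${\rm Irr}(G)\leftrightarrow{\rm Irr}(H)$ and the Diamond Lemma are the same in both. Your treatment of (ii)$\Leftrightarrow$(iii) through the explicit identity relating $J(u,v,w)$ and $S_{u,v,w}$ is in fact more detailed than the paper's, which simply appeals to Proposition~\ref{Proposition-Diamond-Lemma} for ``the final equivalence relations''.
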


\begin{proof}
By a simpler discussion as that given in the proof of \cite[Lemma 3.6]{He}, there is a homogeneous polynomials $c_{(u|u')}\in k\langle X\rangle$ of degree $\deg(uu')$ for each pair $u>_{\rm lex}u'\in U$ such that
\begin{itemize}
\item $\bar{c}_{(u|u')}=[[u],[u']]-c_{(u|u')} \in (G)$,
\item $c_{(u|u')}$ is a linear combination of super-words $[u_1]\cdots [u_r]\leq_{\rm lex}[uu']$ with $u_1,\cdots,u_r\in U$.
\end{itemize}
In particular, if $\sh(uu')=(u,u')$ and $uu'\in U$ then $c_{(u|u')}=[uu']$, and if  $\sh(uu')=(u,u')$ but $uu'\not\in U$ then $c_{(u|u')}=g_{uu'}$.  Let $h_{u,u'}$ be the polynomial in $k\langle X_U\rangle$ obtained from $c_{(u|u')}$ by replacing each super-word $[u_1]\cdots [u_r]$ occurs in $c_{(u|u')}$ with $x_{\rho(u_1)}\cdots x_{\rho(u_r)}$. Clearly $h_{u,u'}$ is a linear combination of lexicographical lesser words than $x_{\rho(u)}x_{\rho(u')}$. By construction, we have the following observations:
\begin{itemize}
\item $f_u=\bar{h}_{u',u''}$ for any word $u\in U\backslash X$ with $\sh(u)=(u',u'')$ and so $F_U\subseteq H$,
\item $\beta(\bar{h}_{u,u'}) = \bar{c}_{(u|u')}$ and so $\bar{h}_{u,u'}-\bar{c}_{(u|u')} \in (F_U)$.
\end{itemize}
Hence one has $(F_U\cup G) = (F_U \cup \{\bar{c}_{(u|u')}\,|\,u>_{\rm lex}u'\in U\}) = (H)$.
Note that $\beta$ induces an isomorphism of graded algebras $k\langle X_U\rangle/(H) \xrightarrow{\cong} k\langle X\rangle/(G)$. Thus all monotonic super-words in super-letters $[u], u\in U$ form a basis of $k\langle X\rangle/(G)$ iff all monotonic words on $X_U$ form a basis of $k\langle X_U\rangle/(H)$. Then Proposition \ref{Proposition-Diamond-Lemma} implies the final equivalence relations.
\end{proof}

\begin{remark}
Lemma \ref{Lemma-calcultion} is used in the sequel to simplify the computation in determining whether $G$ is a Gr\"{o}bner set (see Example \ref{Example-AS-regular}). However, it is interesting in its own right. Given a Lie algebra $\mathfrak{g}=\text{Lie}(X)/(G)_L$, where $G$ is a set of Lie polynomials and $(G)_L$ is the Lie ideal of $\text{Lie}(X)$ generated by $G$. Assume that $\{e_i\}_{i\in I}$ is an ordered basis of $\mathfrak{g}$ with structure constants $\{[e_i,e_j]-\sum a_{i,j}^re_r\}_{i>j}$, then Lemma \ref{Lemma-calcultion} is a generalization of the following formula:
$$
k\langle X\rangle/(G) \cong U(\mathfrak{g}) \cong k\langle x_i\;|\;i\in I\rangle/(\{[x_i,x_j]- \textstyle \sum a_{i,j}^r x_r\}_{i>j}),
$$
where $U(g)$ denotes the universal enveloping algebra of $\mathfrak{g}$.
\end{remark}

\begin{definition}
Let $G \subseteq k\langle X\rangle$ be a set of homogeneous polynomials. A super-letter $[u]$ is said to be \emph{hard modulo $G$} if, in the algebra $k\langle X \rangle/(G)$, $[u]$ is not a linear combination of super-words of the same degree in lexicographical lesser super-letters than $[u]$.

Clearly if $u$ is a Lyndon word that is irreducible modulo $G$, then $[u]$ is hard modulo $G$.
\end{definition}

\begin{lemma}\label{Lemma-constructing-AS-regular}
Let $U$ be a finite closed set of Lyndon words and let $A=k\langle X\rangle/(G)$, where $G$ is a set of homogeneous polynomials such that $\Phi(U)\subseteq \lw(G) \subseteq \mathbb{L}\backslash U$. If $G$ is a Gr\"{o}bner set modulo which there are no hard super-letters other than $[u], u\in U$, then one has:
\begin{enumerate}
\item
There exists an algebra filtration $\mathcal{F} = \{F_i\}_{i\geq 0}$ one $A$ such that,
as graded algebras,
$$
\text{\rm gr}_{\mathcal{F}}(A)\, \cong \, k\langle X_U\rangle/([x_{\rho(u)},x_{\rho(u')}]: u>_{\rm lex}u'\in U).
$$
\item
$A$ is AS-regular of global dimension $d=\#(U)$ and of Gorenstein parameter $l=\sum_{u\in U} \deg(u)$. Moreover, $A$ is strongly Noetherian, Auslander-regular and Cohen-Macaulay.
\end{enumerate}
\end{lemma}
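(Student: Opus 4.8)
The plan is to identify $\mathrm{gr}_{\mathcal F}(A)$ with the skew polynomial algebra $B:=k\langle X_U\rangle/([x_{\rho(u)},x_{\rho(u')}]:u>_{\rm lex}u'\in U)$ and then read off the homological data of $A$ from $B$ via the comparison theorems for filtered rings. For part (1) I would start from Lemma \ref{Lemma-calcultion}: the hypotheses that $G$ is a Gr\"obner set and that the only hard super-letters modulo $G$ are the $[u]$, $u\in U$, force every $[v]$ with $v\in\bar\Phi(U)$ to be congruent modulo $(G)$ to a combination $g_v$ of monotonic super-words in super-letters from $U$ of leading word lexicographically below $v$, which is exactly the data Lemma \ref{Lemma-calcultion} requires; it then yields $A\cong k\langle X_U\rangle/(H)$ with $H=\{\bar h_{u,u'}=[x_{\rho(u)},x_{\rho(u')}]-h_{u,u'}:u>_{\rm lex}u'\in U\}$ a Gr\"obner set, where each $h_{u,u'}$ is a combination of monotonic words on $X_U$ whose underlying words in $X^*$ have the same degree as $uu'$ but are lexicographically smaller. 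I would then assign a weight $n_u\in\mathbb N$ to each $x_{\rho(u)}$ and let $\mathcal F$ be the ascending filtration of $k\langle X_U\rangle$ — and hence of $A$ — defined by this weighting; the weights are to be chosen so that in each $\bar h_{u,u'}$ the term $[x_{\rho(u)},x_{\rho(u')}]$ is precisely the top-weight part, i.e.\ $n_{w_1}+\cdots+n_{w_t}<n_u+n_{u'}$ for every monomial $x_{\rho(w_1)}\cdots x_{\rho(w_t)}$ occurring in $h_{u,u'}$. Granting this, the images of the $x_{\rho(u)}$ generate $\mathrm{gr}_{\mathcal F}(A)$ and satisfy $[x_{\rho(u)},x_{\rho(u')}]=0$ there, so there is a surjection $B\twoheadrightarrow\mathrm{gr}_{\mathcal F}(A)$.

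To upgrade this to the claimed isomorphism I would compare Hilbert series. By Theorem \ref{Theorem-invariant-algebra}(3), $H_A(t)=\prod_{u\in U}(1-t^{\deg(u)})^{-1}$, and a filtration of a connected graded algebra compatible with the grading has $H_{\mathrm{gr}_{\mathcal F}(A)}=H_A$. On the other side the defining relations of $B$ form a Gr\"obner set: the only overlaps are $x_{\rho(u)}x_{\rho(v)}x_{\rho(w)}$ with $u>_{\rm lex}v>_{\rm lex}w$, and both reductions to $x_{\rho(w)}x_{\rho(v)}x_{\rho(u)}$ produce the scalar $q_{u,v}q_{u,w}q_{v,w}$; hence by Proposition \ref{Proposition-Diamond-Lemma} the monotonic words on $X_U$ form a basis of $B$ and $H_B(t)=\prod_{u\in U}(1-t^{\deg(u)})^{-1}$. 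Since a surjection of connected graded algebras with equal Hilbert series is an isomorphism, $\mathrm{gr}_{\mathcal F}(A)\cong B$, proving (1).

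For part (2), $B$ is an iterated Ore extension of $k$: adjoining the variables $x_{\rho(z_d)},x_{\rho(z_{d-1})},\dots,x_{\rho(z_1)}$, where $z_1>_{\rm lex}\cdots>_{\rm lex}z_d$ enumerate $U$, in increasing order of the index makes each step an Ore extension by a scaling automorphism and zero derivation. Consequently $B$ is strongly Noetherian, Auslander-regular and Cohen-Macaulay, with $\gldim B=\#(U)=d$, $\gkdim B=d$, and is AS-Gorenstein of Gorenstein parameter $\sum_{u\in U}\deg(x_{\rho(u)})=\sum_{u\in U}\deg(u)=l$. By the standard comparison theorems for good filtrations these properties pass from $\mathrm{gr}_{\mathcal F}(A)\cong B$ to $A$: it is strongly Noetherian, Auslander-regular and Cohen-Macaulay, $\gkdim A=d$, and $\gldim A\le d$. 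Since $U$ is finite, Theorem \ref{Theorem-invariant-algebra}(3),(5) independently gives $\gkdim A=d$, $\gldim A=d$, and that the top of the minimal free resolution of $k_A$ is $A(-l)$; with AS-Gorensteinness lifted from $B$ this pins the Gorenstein parameter to $l$. Hence $A$ satisfies (AS1)--(AS3), is AS-regular of global dimension $d=\#(U)$ and Gorenstein parameter $l=\sum_{u\in U}\deg(u)$, and is strongly Noetherian, Auslander-regular and Cohen-Macaulay.

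The real obstacle is the construction of the weight $n$: one must check that the inhomogeneous relations $\bar h_{u,u'}$ admit a filtered form with leading part $[x_{\rho(u)},x_{\rho(u')}]$, in particular that no monomial of $h_{u,u'}$ carries the $X_U$-multidegree of $x_{\rho(u)}x_{\rho(u')}$ (such as $x_{\rho(u')}x_{\rho(u)}$ itself). This requires controlling the monotonic super-words appearing in the normal forms $g_v$, and it is here that the Lyndon-word machinery of Sections 1 and 2 — Shirshov factorization, Lemma \ref{Lemma-Shirshov-factor-dichotomy}, Proposition \ref{Propositoin-bound-Phi-construction} and Lemma \ref{Lemma-chain-extending} — must be brought to bear. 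Everything after the isomorphism in (1) is a routine appeal to filtered-ring theory and to the structure of iterated Ore extensions.
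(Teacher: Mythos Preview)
Your overall strategy---produce a filtration on $A$ whose associated graded is the skew polynomial ring $B$, then lift the homological properties---is exactly the paper's. Part~(2) of your argument and the paper's agree essentially verbatim: both invoke the iterated-Ore structure of $B$ (the paper cites \cite{ZZ1,ZZ2} for this), then transfer the properties through the filtration and read off $d$ and $l$ from Theorem~\ref{Theorem-invariant-algebra}.

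Where you diverge is in Part~(1). You route through Lemma~\ref{Lemma-calcultion} to obtain a presentation $A\cong k\langle X_U\rangle/(H)$ and then seek a weight $n:X_U\to\mathbb{N}$ making each $h_{u,u'}$ strictly lighter than $x_{\rho(u)}x_{\rho(u')}$. You correctly flag this as ``the real obstacle'', and it is a genuine gap rather than a routine verification: the terms of $h_{u,u'}$ are only known to be \emph{lexicographically} smaller than $x_{\rho(u)}x_{\rho(u')}$, and there is no evident reason why a single $\mathbb{N}$-valued weight should separate them simultaneously for all pairs $(u,u')$. In particular, nothing in Lemma~\ref{Lemma-Shirshov-factor-dichotomy}, Proposition~\ref{Propositoin-bound-Phi-construction}, or Lemma~\ref{Lemma-chain-extending} produces such a weight; those results concern chains on $\Phi(U)$, not the shape of the straightening relations $h_{u,u'}$. (A minor additional wrinkle: Lemma~\ref{Lemma-calcultion} is stated for $G$ of the specific form $\{[v]-g_v:v\in\bar\Phi(U)\}$, so you would first need to replace the given $G$ by such a set generating the same ideal---this is doable via a Hilbert-series comparison, but should be said.)

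The paper avoids the weight problem entirely by working inside $k\langle X\rangle$ with super-letters rather than passing to $k\langle X_U\rangle$. The hypothesis that no $[v]$ with $v\notin U$ is hard means every such $[v]$ is, in $A$, a combination of super-words in super-letters strictly lex-smaller than $[v]$; combined with the PBW basis from Theorem~\ref{Theorem-invariant-algebra}, this is precisely the input to Kharchenko's filtration argument \cite[Lemma~14, Theorem~3]{Kh}, which the paper invokes directly. That filtration is by \emph{super-word length} (number of hard super-letter factors in the monotonic expansion), not by a scalar weight on generators, and the point of \cite{Kh} is that straightening $[u][u']$ for $u>_{\rm lex}u'$ produces $q_{u,u'}[u'][u]$ plus terms of strictly shorter super-length. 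This is what makes $\mathcal{F}$ an algebra filtration with $\mathrm{gr}_{\mathcal F}(A)\cong B$, and it sidesteps the existence question your approach leaves open.
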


\begin{proof}
(1)
By Theorem \ref{Theorem-invariant-algebra}, the set of all monotonic super-words in super-letters $[u],u\in U$ is a basis of $A$. Since there are no hard super-letters other than $[u],u\in U$, if $v$ is a reducible word, then $[v]$ is not hard and hence it is a linear combination of super-words of the same degree in lexicographical lesser super-letters than $[v]$ in $A$. A similar, but simpler, discussion of that given in \cite[Lemma 14, Theorem 3]{Kh} implies the result.

(2) By \cite[Lemma 3.5]{ZZ1}, \cite[Lemma 5.3]{ZZ2} and the part (1), we obtain that $\text{gr}_{\mathcal{F}}(A)$ is AS-regular, strongly Noetherian, Auslander-regular and Cohen-Macaulay, and so is $A$ by \cite[Theorem 3.6]{ZZ1} and \cite[Lemma 4.4]{StZ}. Apply Theorem \ref{Theorem-invariant-algebra}, we have $d=\#(U)$ and $l=\sum_{u\in U} \deg(u)$.
\end{proof}

Note that the sufficient condition given in Lemma \ref{Lemma-constructing-AS-regular} for a graded algebra $k\langle X\rangle/(G)$ to be AS-regular is not easy to check in general. In the remaining of the paper, we focus on providing practical approaches for constructing AS-regular algebras.

\begin{definition}\label{Definition-A(U,q)}
To each closed set $U$ of Lyndon words we associate with a graded algebra
$$
A(U,q)=k\langle X\rangle/(G(U,q)),\quad \text{where}\quad G(U,q)=\{[v]\,|\,v\in \bar{\Phi}(U)\}.
$$

In general, $(G(U,q)) \neq (\{[v]\,|\,v\in\Phi(U)\})$, however, if $G(U,q)$ is a Gr\"{o}bner set, then the obstruction of $A(U,q)$ is $\Phi(U)$ and $(G(U,q))= (\{[v]\,|\,v\in\Phi(U)\})$.
\end{definition}

\begin{theorem}\label{Theorem-approach-1}
Let $U$ be a finite closed set of Lyndon words. If $G(U,q)$ is a Gr\"{o}bner set, then $A(U,q)$ is AS-regular of global dimension $d=\#(U)$ and of Gorenstein parameter $l=\sum_{u\in U} \deg(u)$. Moreover, $A(U,q)$ is strongly Noetherian,  Auslander-regular and Cohen-Macaulay.
\end{theorem}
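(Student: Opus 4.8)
The plan is to verify the hypotheses of Lemma~\ref{Lemma-constructing-AS-regular} for the set $G=G(U,q)$ and then quote it. Since each super-letter $[v]$ is monic with $\lw([v])=v$, we have $\lw(G(U,q))=\bar\Phi(U)$, so Proposition~\ref{Propositoin-bound-Phi-construction} gives the required inclusions $\Phi(U)\subseteq\lw(G(U,q))\subseteq\mathbb{L}\backslash U$, and $U$ is finite by hypothesis. Thus, granting that $G(U,q)$ is a Gr\"obner set, the only point still to be checked before invoking Lemma~\ref{Lemma-constructing-AS-regular} is that, modulo $G(U,q)$, no super-letter other than $[u]$ with $u\in U$ is hard. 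Once this is in hand, Lemma~\ref{Lemma-constructing-AS-regular}(2) yields at one stroke that $A(U,q)$ is AS-regular, strongly Noetherian, Auslander-regular and Cohen--Macaulay, with $\gldim A(U,q)=\#(U)$ and Gorenstein parameter $\sum_{u\in U}\deg(u)$; the finiteness of $\gkdim$ required by (AS2) is $\gkdim A(U,q)=\#(U)$ from Theorem~\ref{Theorem-invariant-algebra}(3).

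So the heart of the proof is the claim: \emph{for every Lyndon word $v\notin U$, the super-letter $[v]$ is not hard modulo $G(U,q)$.} I would argue by induction on $l(v)$. If $v\in\bar\Phi(U)$ then $[v]\in G(U,q)$, so $[v]=0$ in $A(U,q)$ and there is nothing to prove. Otherwise $v\notin U\cup\bar\Phi(U)$; writing $\sh(v)=(v',v'')$ one has $v'\notin U$ or $v''\notin U$. Because $v\notin U=\Psi(\bar\Phi(U))$, the word $v$ is reducible modulo $G(U,q)$, so the normal form of the polynomial $[v]=v+(\text{lex-lower terms of the same constitute})$ is a linear combination of elements of ${\rm Irr}(G(U,q))$ --- monotonic products of words in $U$ --- that are $<_{\rm deglex}v$. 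Using the change between the monotonic-word basis of $A(U,q)$ supplied by Theorem~\ref{Theorem-invariant-algebra}(2) and the corresponding monotonic-super-word basis $\{[u_1]\cdots[u_s]\mid u_i\in U,\ u_1\leq_{\rm lex}\cdots\leq_{\rm lex}u_s\}$ --- a transition that is unitriangular with respect to the deglex order on leading words --- one rewrites $[v]$ in $A(U,q)$ as a linear combination of monotonic super-words $[u_1]\cdots[u_s]$ whose leading words $u_1\cdots u_s$ satisfy $u_1\cdots u_s<_{\rm lex}v$.

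The remaining --- and decisive --- step is to show that each super-letter $[u_i]$ occurring in this expression satisfies $u_i<_{\rm lex}v$, i.e. that the straightening of $[v]$ cannot introduce a super-letter $\ge_{\rm lex}[v]$, even though a word $w<_{\rm lex}v$ of the constitute of $v$ may have a longest Lyndon suffix that is $>_{\rm lex}v$. This is exactly where the Gr\"obner hypothesis on $G(U,q)$ (equivalently, by Lemma~\ref{Lemma-calcultion}, on the straightening system $H$ over $X_U$) has to be used in an essential way, and it is the main obstacle: one needs a Shirshov-type ``special bracketing'' of a Lyndon factor $w\in\bar\Phi(U)$ of $v$ --- a bracketing $[v]_w$ of $v$ lying in the two-sided ideal generated by $[w]$, with $[v]-[v]_w$ a combination of super-words with leading word $<_{\rm lex}v$ --- together with Kharchenko's analysis of hard super-letters (see \cite[Lemma~14, Theorem~3]{Kh}), iterated along the induction, to conclude that the surviving super-letters all lie strictly below $[v]$. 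An equivalent route, which sidesteps the super-letter bookkeeping, is to transfer everything to $B=k\langle X_U\rangle/(H)$ via Lemma~\ref{Lemma-calcultion}: the Gr\"obner property of $H$ gives the PBW straightening in $B$, hence the algebra filtration of Lemma~\ref{Lemma-constructing-AS-regular}(1) with $\text{gr}_{\mathcal{F}}A(U,q)$ a quantum polynomial ring in $X_U$, whereupon \cite[Lemma~3.5]{ZZ1}, \cite[Lemma~5.3]{ZZ2}, \cite[Theorem~3.6]{ZZ1}, \cite[Lemma~4.4]{StZ} and Theorem~\ref{Theorem-invariant-algebra} deliver all the asserted properties and invariants.
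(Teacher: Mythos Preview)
Your overall plan---verify the hypotheses of Lemma~\ref{Lemma-constructing-AS-regular} and apply it---is exactly the paper's plan, and your reduction to the single claim ``every super-letter $[v]$ with $v\notin U$ is not hard modulo $G(U,q)$'' is correct. You even write down the key dichotomy: if $v\notin U\cup\bar\Phi(U)$ and $\sh(v)=(v',v'')$, then $v'\notin U$ or $v''\notin U$. But you then fail to exploit it, and instead launch into a normal-form/straightening argument whose ``decisive step'' you yourself flag as unresolved.

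The paper's proof closes this immediately by \emph{strengthening the induction hypothesis}: prove by induction on $l(v)$ that $[v]=0$ in $A(U,q)$ for every Lyndon word $v\notin U$ (not merely that $[v]$ is not hard). The base case is vacuous (letters lie in $U$). For the induction step, if $v\in\bar\Phi(U)$ then $[v]\in G(U,q)$ and $[v]=0$ by definition. Otherwise $v\notin U\cup\bar\Phi(U)$, so with $\sh(v)=(v',v'')$ one of $v',v''$ lies outside $U$; by induction $[v']=0$ or $[v'']=0$, and since $[v]=[[v'],[v'']]$ this forces $[v]=0$. That is the whole argument---no normal forms, no special bracketings, no appeal to \cite{Kh} are needed here. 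The Gr\"obner hypothesis is used only through Lemma~\ref{Lemma-constructing-AS-regular} (and Theorem~\ref{Theorem-invariant-algebra}), not in the vanishing claim itself.

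Your alternative route via Lemma~\ref{Lemma-calcultion} and the algebra $k\langle X_U\rangle/(H)$ would also work, but it reproves the content of Lemma~\ref{Lemma-constructing-AS-regular} rather than invoking it; the paper's direct induction is both shorter and sharper, yielding $[v]=0$ outright rather than the weaker ``not hard''.
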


\begin{proof} Let $A=A(U,q)$.
Note that there are no reducible Lyndon words of length $1$, and if $v$ is a reducible Lyndon word of length $2$ then $[v]=0$ in $A$ because $v\in \bar{\Phi}(U)$. Assume that $[v]=0$ in $A$ for any reducible Lyndon word $v$ of length less than $p$. Now let $v_0$ be an arbitrary reducible Lyndon word of length $p$ with $\sh(v_0)=(v'_0,v''_0)$. If $v_0',v_0''\in U$, then $v_0\in \bar{\Phi}(U)$ and hence $[v_0]=0$ in $A$. If $v_0'\not\in U$ (resp.  $v_0''\not\in U$), then by induction one has $[v_0']=0$ (resp, $[v_0'']=0$) in $A$, and both implies that $[v_0]=0$ in $A$. Therefore $[v]=0$ in $A$ for any reducible Lyndon word $v$ and so there are no hard super-letters other than $[u],u\in U$. Now the result follows from Lemma \ref{Lemma-constructing-AS-regular}.
\end{proof}

\begin{remark}
There is no counterpart of Theorem \ref{Theorem-approach-1} if we set $G(U,q)=\{[v]\,|\,v\in\Phi(U)\}$ in Definition \ref{Definition-A(U,q)}, since in this situation there may exists some $v\in \bar{\Phi}(U)\backslash \Phi(U)$ such that $[v]\not\in (G(U,q))$. However, for any Lyndon words $u,v$ with $u=avb$ there exists a factorization $b=cd$ such that $[u]=[a[vc]d]$. This fact follows immediately from Lemma \ref{Lemma-Shirshov-factor-dichotomy} through a simple induction on the length of $u$. It is crucial in the Gr\"{o}bner bases theory for Lie algebras \cite{BC}.
\end{remark}

\begin{example}
Let $U=X$. Then $G(U,q)=\{\ [x_j,x_i]\;|\;1\leq i<j\leq n\ \}$. Clearly $G(U,q)$ is a Gr\"{o}bner set. The algebra $A(U,q)=\mathcal {O}_q(k^n)$ is called the \emph{multiparameter quantum affine space}. It is AS-regular of global dimension $n$ and of Gorenstein parameter $n$.
\end{example}

\begin{example}\label{Example-AS-regular} We continue to use the symbols given in Example \ref{example-closed-set} and give some AS-regular algebras below by Theorem \ref{Theorem-approach-1}. The computation is based on Lemma \ref{Lemma-calcultion}. In each case, the equation system obtained from all $J(u,v,w)$ with $u>_{\rm lex}v>_{\rm lex}w$ are not very complicated and so it is solved by hand.

(1) $A(U_2,q)=k\langle x_1,x_2\rangle/([x_2,x_1])$. $G(U_2,q)$ is always a Gr\"{o}bner set. $A$ is the quantum plane and it is AS-regular of global dimension $2$ and of Gorenstein parameter $2$.

(2) $A(U_3,q)\cong k\langle x_1,x_{21}, x_2\rangle/(f_1,f_2,f_3)$, where
    \begin{eqnarray*}
    f_1=[x_2,x_{21}],\quad f_2=[x_2, x_1] - x_{21},\quad f_3=[x_{21},x_1].
    \end{eqnarray*}
    $G(U_3,q)$ is a Gr\"{o}bner set iff $q_{22}=q_{11}$. In this case, $A(U_3,q)$ is the algebra of type $S_1$ in \cite{ASc}. It is AS-regular of global dimension $3$ and of Gorenstein parameter $4$.

(3) $A(U_4,q) \cong k\langle x_1,x_{21},x_{221},x_2\rangle /(f_1,f_2,\cdots,f_6)$, where
    \begin{align*}
    &f_1=[x_2,x_{221}],& &f_2=[x_2,x_{21}]-x_{221},& & f_3=[x_2,x_1]-x_{21},\\
    &f_4=[x_{221},x_{21}],& &f_5=[x_{221},x_1]- q_{21}(q_{11}-q_{22})x_{21}^2,& &f_6=[x_{21},x_1].
    \end{align*}
    $G(U_4,q)$ is a Gr\"{o}ner set iff one of the following conditions holds:
    \begin{enumerate}
    \item[(a)] $q_{22}=1,\, q_{11}=1,\, q_{21}q_{12}=1$.
    \item[(b)] $q_{22}=\zeta,\, q_{11}=\zeta,\, q_{21}q_{12}=\zeta^2$, where $\zeta^2+\zeta+1=0$.
    \item[(c)] $q_{11}=q_{22}^2,\, \, q_{21}q_{12}q_{22}^2=1$.
    \end{enumerate}
     Taking Case (a), Case (b) and Case (c), $A(U_4,q)$ are corresponding to $D(-2q_{21},-q_{21})$, $D(q_{21}\zeta^2, -q_{21}\zeta^2)$ and $D(-q_{21}-q_{21}q_{22}^2,-q_{21}q_{22})$ in \cite[Theorem A]{LPWZ} respectively. They are AS-regular of global dimension $4$ and of Gorenstein parameter $7$.

\begin{remark} In the case (c), let $q_{22}$ be a root of the polynomial $x^2-\frac{v}{p}x+1$ and let $q_{21}=pq_{22}-v$, then we obtain the algebra $D(v,p)$; that is, we can reconstruct $D(v,p)$ as a deformation of the universal enveloping algebra $D(-2,-1)$.
\end{remark}

(4) $A(U_5,q) \cong k\langle x_1,x_{21},x_{22121},x_{221},x_2\rangle/(f_1,f_2,\cdots,f_{10})$, where
    \begin{eqnarray*}
    &&f_1=[x_2,x_{221}],\qquad f_2=[x_2,x_{22121}],\qquad f_3=[x_2,x_{21}]-x_{221},\qquad f_4=[x_2,x_1]-x_{21},\\
    &&f_5=[x_{221},x_{22121}],\;\; f_6=[x_{221},x_{21}]-x_{22121},\qquad f_7=[x_{221},x_1]- q_{21}(q_{11}-q_{22})x_{21}^2,\\
    &&
    f_8=[x_{22121},x_{21}], \quad f_9=[x_{22121},x_1]- q_{21}^2q_{11}(q_{11}-q_{22})(1-q_{22}^2q_{21}q_{12})x_{21}^3,\quad f_{10}=[x_{21},x_1].
    \end{eqnarray*}
    There does not exist $q$ such that $G(U_5,q)$ is a Gr\"{o}bner set. Indeed, consider $J(x_2,x_{221},x_{21})$, $J(x_2,x_{221},x_1)$ and $J(x_2,x_{22121},x_{21})$, one gets a system of equations with no solutions:
    \begin{eqnarray*}
    q_{21}q_{12}q_{11}=1,\quad q_{22}^2+q_{22}+1=0,\quad q_{11}=-q_{22}^3=-1,\quad q_{22}^6(q_{21}q_{12})^5q_{11}^4=1.
    \end{eqnarray*}

(5) $A(U'_5,q) \cong k\langle x_1,x_{21},x_{221},x_{2221},x_2\rangle/(f_1,f_2,\cdots,f_{10})$, where
    \begin{eqnarray*}
    &&f_1=[x_2,x_{2221}],\;\qquad f_2=[x_2,x_{221}]-x_{2221},\qquad f_3=[x_2,x_{21}]-x_{221},\\
    &&f_4=[x_2,x_1]-x_{21},\quad f_5=[x_{2221},x_{221}],\quad f_6=[x_{2221},x_{21}]-q_{22}^2q_{21}(q_{21}q_{12}q_{11}-1)x_{221}^2,\\
    &&f_7=[x_{2221},x_1]- \big(q_{22}q_{21}^2(q_{22}q_{21}q_{12}q_{11}+1)(q_{11}-q_{22}) + q_{21}^2q_{11}(1-q_{22}^4q_{21}q_{12})\big) x_{21}x_{221},\\
    &&f_8=[x_{221},x_{21}], \;\qquad f_9=[x_{221},x_1]- q_{21}(q_{11}-q_{22})x_{21}^2,\qquad f_{10}=[x_{21},x_1].
    \end{eqnarray*}
    $G(U_5',q)$ is a Gr\"{o}bner set iff $q_{22}=q_{11}=1$ and $q_{21}q_{12}=1$. In this case, $A(U_5',q)$ corresponds to $\mathcal{D}(q_{21})$ in \cite[Example 3.8]{ZL}. It is AS-regular of global dimension $5$ and of Gorenstein parameter $11$.

(6) $A(U''_5,q) \cong k\langle x_1,x_{211},x_{21},x_{221},x_2\rangle/(f_1,f_2,\cdots,f_{10})$, where
    \begin{eqnarray*}
    &&f_1=[x_2,x_{221}],\;\quad f_2=[x_2,x_{21}]-x_{221},\quad f_3=[x_2,x_{211}],\quad f_4=[x_2,x_1]-x_{21},\\
    &&f_5=[x_{221},x_{21}],\quad f_6=[x_{221},x_{211}],\;\;\qquad f_7=[x_{221},x_1]- q_{21}(q_{11}-q_{22})x_{21}^2,\\
    &&f_8=[x_{21},x_{211}],\quad f_9=[x_{21},x_1]-x_{211},\quad f_{10}=[x_{211},x_1].
    \end{eqnarray*}
    $G(U_5'',q)$ is a Gr\"{o}bner set iff one of the following conditions holds:
    \begin{enumerate}
    \item[(a)] $q_{22}=1,\, q_{11}=1,\, \, q_{21}q_{12}=1$.
    \item[(b)] $q_{22}=\zeta,\, q_{11}=\zeta,\, q_{21}q_{12}=\zeta^2$, where $\zeta^2+\zeta+1=0$.
    \end{enumerate}
    In Case (a) and Case (b), $A(U_5'',q)$ correspond to $\mathcal{L}(-2q_{21},\frac{1}{2},\frac{1}{2})$ and $\mathcal{L}(q_{21}\zeta^2,-1,q)$ where $q^2-q+1=0$,
    in \cite[Example 7.11]{ZL} respectively. They are AS-regular of global dimension $5$ and of Gorenstein parameter $10$.
\end{example}

\begin{proposition}\label{Proposition-Fibonacci}
Assume the notations given in Example \ref{Example-Fibonacci}. Then, for any $r\geq 5$, there is no matrix $q$ such that $G(U_r,q)$ is a Gr\"{o}bner set.
\end{proposition}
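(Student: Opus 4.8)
The plan is to leverage the computation already carried out for $r=5$ in Example \ref{Example-AS-regular}(4): I want to show that for every $r\ge 6$ the hypothesis ``$G(U_r,q)$ is a Gr\"obner set'' forces $q$ to satisfy exactly the same inconsistent system of scalar equations. The case $r=5$ being Example \ref{Example-AS-regular}(4), fix $r\ge 6$ and assume that $G(U_r,q)$ is a Gr\"obner set. By Lemma \ref{Lemma-calcultion} this means that $J(u,v,w)$ reduces to $0$ modulo the associated set $H=H_{U_r}\subseteq k\langle X_{U_r}\rangle$ for every $u>_{\rm lex}v>_{\rm lex}w\in U_r$. I will only use the three instances built from the five Fibonacci words $f_0,f_1,f_2,f_3,f_4$, all of which belong to $U_5\subseteq U_r$: namely $J(f_1,f_3,f_2)$, $J(f_1,f_3,f_0)$ and $J(f_1,f_4,f_2)$ --- in the letters of Example \ref{Example-AS-regular} these are $J(x_2,x_{221},x_{21})$, $J(x_2,x_{221},x_1)$ and $J(x_2,x_{22121},x_{21})$, which are legitimate since $f_1>_{\rm lex}f_3>_{\rm lex}f_2>_{\rm lex}f_0$ and $f_1>_{\rm lex}f_4>_{\rm lex}f_2$.

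The mechanism I would exploit is the gap in the lengths of Fibonacci words, $l(f_4)=5$ while $l(f_5)=8$. Using Proposition \ref{Propositoin-bound-Phi-construction} together with $\sh(f_5)=(f_3,f_4)$, one checks that $\bar\Phi(U_5)$ and $\bar\Phi(U_r)$ consist of the same words in each degree $\le 7$, and that among the generators $\bar h_{u,u'}$ of $H$ of degree $\le 8$ the only one on which the two closed sets disagree is $\bar h_{f_3,f_4}=f_{f_5}$, which equals $[x_{221},x_{22121}]$ for $U_5$ but $[x_{221},x_{22121}]-x_{\rho(f_5)}$ for $U_r$. Since $J(f_1,f_3,f_2)$ has degree $6$ and $J(f_1,f_3,f_0)$ has degree $5$, their reductions modulo $H_{U_r}$ use only relations of degree $\le 6$ and hence coincide with their reductions modulo $H_{U_5}$; so these two relations impose exactly the equations they impose in Example \ref{Example-AS-regular}(4). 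For $J(f_1,f_4,f_2)$, which has degree $8$, the only point at which reduction modulo $H_{U_r}$ can diverge from reduction modulo $H_{U_5}$ is each application of $\bar h_{f_3,f_4}$, and this divergence is confined to adding a scalar multiple of the single generator $x_{\rho(f_5)}$; since $x_{\rho(f_5)}$ is irreducible modulo $H_{U_r}$ and is not among the monotonic words on $\{x_{\rho(f_i)}:0\le i\le 4\}$ occurring in the $H_{U_5}$-reduced form, triviality of $J(f_1,f_4,f_2)$ modulo $H_{U_r}$ forces its $H_{U_5}$-reduced form to vanish as well, i.e.\ the same degree-$8$ equations as before. Altogether $q$ must satisfy the system $q_{21}q_{12}q_{11}=1$, $q_{22}^2+q_{22}+1=0$, $q_{11}=-q_{22}^3=-1$, $q_{22}^6(q_{21}q_{12})^5q_{11}^4=1$ of Example \ref{Example-AS-regular}(4), which has no solution --- the desired contradiction.

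The step that needs the most care is the degree bookkeeping: one must verify that reducing a polynomial of degree $\le 8$ modulo $H_{U_r}$ can involve no relation of $H_{U_r}$ of degree exceeding $8$, and that at degrees $\le 8$ the relation sets attached to $U_5$ and to $U_r$ differ only through the replacement of $[x_{221},x_{22121}]$ by $[x_{221},x_{22121}]-x_{\rho(f_5)}$ (no generator $x_{\rho(f_j)}$ with $j\ge 5$ can occur in a relation of degree $\le 8$ except $x_{\rho(f_5)}$ inside $\bar h_{f_3,f_4}$, because $\deg f_5=8$ and $\deg f_5+\deg f_j\ge 9$). The length gap $l(f_4)=5<8=l(f_5)$ is precisely what makes this work, together with the identification $\bar h_{f_3,f_4}=f_{f_5}$ from the proof of Lemma \ref{Lemma-calcultion} and the description of $\bar\Phi$ in Proposition \ref{Propositoin-bound-Phi-construction}. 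Once this bookkeeping is in place, the remaining manipulations are literally those of Example \ref{Example-AS-regular}(4) and need not be repeated.
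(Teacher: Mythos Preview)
Your proposal is correct, and in fact your degree bookkeeping is exactly right: for $r\ge 6$ the relation sets $H_{U_5}$ and $H_{U_r}$ agree in all degrees $\le 7$, and in degree $8$ the only discrepancy is that $\bar h_{f_3,f_4}$ acquires the extra term $-x_{\rho(f_5)}$. The reduction of $J(x_2,x_{22121},x_{21})$ requires a single application of $\bar h_{f_3,f_4}$, so its $H_{U_r}$-normal form is the $H_{U_5}$-normal form plus a scalar multiple of $x_{\rho(f_5)}$, and linear independence of these irreducible monomials forces both pieces to vanish separately.

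However, the paper's own proof is noticeably shorter and uses only the single triple $J(x_2,x_{22121},x_{21})$. The point you do not exploit is that the scalar in front of $x_{\rho(f_5)}$ is a \emph{monomial} in the entries $q_{ij}$ (concretely, the paper writes it as $-q_{22}^{6}q_{21}^{4}q_{12}^{3}q_{11}^{2}$), and since all $q_{ij}\neq 0$ this coefficient never vanishes. Hence $J(x_2,x_{22121},x_{21})$ is \emph{never} trivial modulo $H_{U_r}$, and one obtains a contradiction immediately, with no need for the other two Jacobi triples or the inconsistent system from Example~\ref{Example-AS-regular}(4). In short: you split the normal form into two pieces and use the vanishing of the ``old'' piece to recover the $U_5$ equations; the paper instead looks at the ``new'' piece and observes it can never vanish. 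Your route buys you the comfort of quoting the already-verified inconsistency, at the cost of three computations instead of one; the paper's route buys brevity, at the cost of actually computing (or at least recognising the monomial shape of) the coefficient of $x_{\rho(f_5)}$.
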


\begin{proof}
By Part (4) of Example \ref{Example-AS-regular}, it suffices to show that there is no matrix $q$ such that $G(U_r,q)$ is a Gr\"{o}bner set for $r\geq 6$. Assume the notations given in Lemma \ref{Lemma-calcultion}. Then, for $r\geq6$, one has
$$A(U_r,q)\cong k\langle x_u \, |\, u\in U_r\rangle/(H).$$
Clearly, $[x_2,x_{22121}],\; [x_{22121},x_{21}],\; [x_{221},x_{22121}]-x_{22122121},\; [x_2,x_{21}]-x_{221}\in H$. Thus, modulo $H$,
\begin{align*}
J(x_2,x_{22121},x_{21})&=q_{22}^3q_{21}^2 x_{22121}x_{221}-q_{22}^3q_{21}^3q_{12}^2q_{11}^2x_{221}x_{22121}\\
                       &= q_{22}^3q_{21}^2\big(1-q_{22}^6(q_{21}q_{12})^5 q_{11}^4\big)x_{22121}x_{221} - q_{22}
^6q_{21}^4q_{12}^3q_{11}^2x_{22122121}\\
                       &\neq 0.
\end{align*}
Therefore, $G(U_r,q)$ is not a Gr\"{o}bner set for any $r\geq 6$.
\end{proof}

\begin{remark}
The existence of AS-regular algebras of which the set of obstructions is $\Phi(U_r)$ is of particular interesting, for it relates to that whether or not the estimation in Corollary \ref{Corollary-bound-Gorenstein-parameter} is optimal. The existence of such algebras is realized in Parts (1), (2), (3) of Example \ref{Example-AS-regular} for $r=2,3,4$, and in \cite[Example 4.5]{ZL} for $r=5$. However, \cite[Proposition 8.1]{GF} tells that there is no such bigraded algebras for $r=6$. Naturally we have two questions:
\begin{enumerate}
\item Does there exist a bigraded AS-regular algebra with two generators of which the set of obstructions is $U_r$ for $r\geq 7$?
\item Does there exist an AS-regular algebra with two generators of which the set of obstructions is $U_r$ for $r\geq 6$?
\end{enumerate}
With Proposition \ref{Proposition-Fibonacci} at hand, we may get a negative answer for the first question.
\end{remark}

Now we turn to another approach which can be thought of as a generalization of the universal enveloping algebras of Lie algebras.

Let $c:k\langle X\rangle \otimes k\langle X\rangle  \to k\langle X\rangle \otimes k\langle X\rangle $ be the linear map given by $c(u\otimes v) = q_{u,v}v\otimes u.$
Then $k\langle X\rangle \otimes k\langle X\rangle$ is an associative algebra with multiplication $(\mu\otimes \mu) \circ (\id\otimes c\otimes \id)$ and unit $1\otimes 1$, where $\id=\id_{k\langle X\rangle}$ and $\mu$ is the multiplication of $k\langle X\rangle$. This algebra will be denoted $k\langle X\rangle \otimes^c k\langle X\rangle$. Set $\Delta: k\langle X\rangle \to k\langle X\rangle \otimes^c k\langle X\rangle$ and $\varepsilon:k\langle X\rangle \to k$ to be the homomorphisms of algebras given by $\Delta(x_i)=x_i\otimes 1 + 1\otimes x_i$ and $\varepsilon(x_i)=0$ for every $x_i\in X$. The structure $k\langle X\rangle^c = (k\langle X\rangle, \Delta, \varepsilon, c)$ is a braided bialgebra \cite[Definition 2.2]{Ar}. By definition, a polynomial $f\in k\langle X\rangle^c$ is called \emph{primitive} if $\Delta(f)=f\otimes 1+1\otimes f$.

\begin{theorem}\label{Theorem-approach-2}
Let $U$ be a finite closed set of Lyndon words and let $A=k\langle X\rangle/(G)$, where $G$ is a set of homogeneous polynomials such that $\Phi(U)\subseteq \lw(G) \subseteq \mathbb{L}\backslash U$. If $G$ is a Gr\"{o}bner set and it consists of primitive elements in $k\langle X\rangle^c$ such that
$
c(\,kG\otimes k\langle X\rangle + k\langle X\rangle \otimes kG\,) \subseteq kG\otimes k\langle X\rangle + k\langle X\rangle \otimes kG$.
Then $A$ is AS-regular of global dimension $d=\#(U)$ and of Gorenstein parameter $l=\sum_{u\in U} \deg(u)$. Moreover, $A$ is strongly Noetherian, Auslander-regular and Cohen-Macaulay.
\end{theorem}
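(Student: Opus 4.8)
The plan is to deduce the theorem from Lemma~\ref{Lemma-constructing-AS-regular}. Since $U$ is finite and $G$ is a Gr\"obner set with $\Phi(U)\subseteq\lw(G)\subseteq\mathbb{L}\backslash U$, it suffices to prove that, modulo $G$, there are no hard super-letters other than the $[u]$ with $u\in U$; Lemma~\ref{Lemma-constructing-AS-regular}(2) then yields all of the asserted properties of $A$.

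First I would upgrade $A$ to a braided bialgebra. Each $g\in G$ is primitive, so $\Delta(g)=g\otimes 1+1\otimes g$. Writing a general element of $(G)$ as a sum of terms $agb$ (with $a,b\in k\langle X\rangle$, $g\in G$) and expanding $\Delta(agb)=\Delta(a)(g\otimes1+1\otimes g)\Delta(b)$ inside $k\langle X\rangle\otimes^c k\langle X\rangle$, one carries the braiding $c$ past the elements of $G$ using (i) that $q_{u,v}$ depends only on the constitutes of $u$ and $v$, so $c$ is compatible with the $\mathbb{N}^n$-grading by constitute, and (ii) the hypothesis $c(kG\otimes k\langle X\rangle+k\langle X\rangle\otimes kG)\subseteq kG\otimes k\langle X\rangle+k\langle X\rangle\otimes kG$; this gives $\Delta\big((G)\big)\subseteq (G)\otimes k\langle X\rangle+k\langle X\rangle\otimes(G)$ as well as $c\big((G)\otimes k\langle X\rangle+k\langle X\rangle\otimes(G)\big)\subseteq (G)\otimes k\langle X\rangle+k\langle X\rangle\otimes(G)$. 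Since also $\varepsilon\big((G)\big)=0$ ($G$ being homogeneous of positive degree), the ideal $(G)$ is a braided bi-ideal and $A^c=(A,\Delta,\varepsilon,c)$ is a graded connected braided bialgebra (cf. \cite{Ar}).

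Next I would invoke Kharchenko's PBW theorem for braided bialgebras of diagonal type (see \cite{Kh}; cf. \cite{Ar}) for $A^c$: if $W\subseteq\mathbb{L}$ is the set of Lyndon words $v$ with $[v]$ hard modulo $G$, and $h_v\in\{2,3,\dots\}\cup\{\infty\}$ denotes the height of $[v]$ in $A$, then the monotonic super-words $[v_1]^{k_1}\cdots[v_m]^{k_m}$ with $v_1>_{\rm lex}\cdots>_{\rm lex}v_m$ in $W$ and $1\leq k_i<h_{v_i}$, together with $1$, form a $k$-basis of $A$; hence $H_A(t)=\prod_{v\in W}\frac{1-t^{h_v\deg(v)}}{1-t^{\deg(v)}}$ with the convention $t^{\infty}=0$. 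Now every $u\in U=\Psi(\lw(G))$ is irreducible modulo $G$, so $[u]$ is hard and $U\subseteq W$; and by Theorem~\ref{Theorem-invariant-algebra}(2) together with the Diamond Lemma, the monotonic super-words in the $[u]$, $u\in U$, also form a $k$-basis of $A$ (a unitriangular change of basis from ${\rm Irr}(G)$ with respect to $\leq_{\rm deglex}$), so $\{[u]^k\mid k\geq0\}$ is linearly independent in $A$ and thus $h_u=\infty$ for every $u\in U$. Plugging in $H_A(t)=\prod_{u\in U}(1-t^{\deg(u)})^{-1}$ from Theorem~\ref{Theorem-invariant-algebra}(3) and cancelling the common factor gives
$$
\prod_{v\in W\backslash U}\frac{1-t^{h_v\deg(v)}}{1-t^{\deg(v)}}=1 .
$$
If $W\backslash U\neq\emptyset$, the left-hand side is a product of formal power series each of which has nonnegative coefficients, constant term $1$, and coefficient $1$ at $t^{\deg(v)}$ (because $h_v\geq2$); so its coefficient at $t^{m_0}$, where $m_0:=\min\{\deg(v):v\in W\backslash U\}\geq2$, is at least $\#\{v\in W\backslash U:\deg(v)=m_0\}\geq1$, contradicting the right-hand side. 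Hence $W=U$, and Lemma~\ref{Lemma-constructing-AS-regular} finishes the proof.

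The step I expect to be the main obstacle is the verification that $(G)$ is a braided bi-ideal, i.e.\ that $A$ genuinely carries a braided bialgebra structure --- this is where the hypothesis on $c$ does its work, and propagating it from the linear span $kG$ to the whole ideal $(G)$, while tracking the interaction of $c$ with left and right multiplication, requires some bookkeeping. Granting that, the application of Kharchenko's theorem, the Hilbert-series comparison, and the final appeal to Lemma~\ref{Lemma-constructing-AS-regular} are routine.
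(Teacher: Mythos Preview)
Your proposal is correct and follows essentially the same strategy as the paper: show that $(G)$ is a braided bi-ideal so that $A$ inherits a braided bialgebra structure, then use Kharchenko's theory to conclude that the hard super-letters are exactly the $[u]$ with $u\in U$, and finish with Lemma~\ref{Lemma-constructing-AS-regular}. The paper's proof is more terse: it cites \cite[Theorem~4.3]{Ar} for the bi-ideal step (which you sketch by hand) and \cite[Corollary~2]{Kh} for the direct statement ``$[v]$ is hard iff $v$ is irreducible modulo $G$'' (which you recover via the PBW basis and a Hilbert-series comparison). Your Hilbert-series argument is a valid alternative to that citation, but it is a detour---once $A$ is a braided bialgebra, Kharchenko's corollary gives $W=U$ immediately without any counting.
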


\begin{proof}
By \cite[Theorem 4.3]{Ar}, $(G)$ is a braided biideal of $k\langle X\rangle^c$ and so $A$ is a braided bialgebra. Therefore, by \cite[Corollary 2]{Kh}, a super-letter $[u]$ is hard modulo $G$ iff $u$ is irreducible modulo $G$. Thus there are no hard super-letters other than $[u],u\in U$. Now Lemma \ref{Lemma-constructing-AS-regular} gives the result.
\end{proof}

\begin{remark}Assume the notations given in Theorem \ref{Theorem-approach-2}. We consider the special case when $q_{i,j}=1$ for all $1\leq i,j\leq n$. Then $c$ is the usual flip map, the bracket $[-,-]$ is the Lie operation and the set of primitive elements in $k\langle X\rangle^c$ is the free Lie algebra $\text{Lie}(X)$. Note that $A=k\langle X\rangle/(G)$ is the universal enveloping algebra of the positively graded Lie algebra $\mathfrak{g}=\text{Lie}(X)/(G)_L$. By \cite[Theorem 5.6, Theorem 5.8]{BC}, the set $\{\ [u]\;|\;u\in U\ \}$ is a basis of $\mathfrak{g}$. Thereby, in this case, Theorem \ref{Theorem-approach-2} is the well-known result that the universal enveloping algebra of a finite-dimensional positively graded Lie algebra is AS-regular.
\end{remark}

\begin{lemma}\label{Lemma-primitive}
Assume that $q_{i,j}q_{j,i}=1$ for any $1\leq i,j\leq n$. Then a polynomial $f\in k\langle X\rangle^c$ is primitive iff it is a linear combination of super-letters.
\end{lemma}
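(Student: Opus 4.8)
The plan is to handle the two implications separately: that every super-letter (hence every linear combination of super-letters) is primitive is a quick induction, while the converse — that there are no other primitives — is the substantive half, and rests on the basis of $k\langle X\rangle$ by monotonic super-words.

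For the forward implication the first step is the coproduct formula for a braided commutator. Let $a,b\in k\langle X\rangle$ be polynomials all of whose monomials share one fixed constitute. Expanding $\Delta(ab)=\Delta(a)\Delta(b)$ in $k\langle X\rangle\otimes^{c}k\langle X\rangle$ with $\Delta(a)=a\otimes1+1\otimes a$, $\Delta(b)=b\otimes1+1\otimes b$, and using the multiplication of $\otimes^{c}$ (in particular $(a\otimes1)(1\otimes b)=a\otimes b$ and $(1\otimes a)(b\otimes1)=q_{a,b}\,b\otimes a$), one gets
$$
\Delta\big([a,b]\big)=[a,b]\otimes1+1\otimes[a,b]+\big(1-q_{a,b}q_{b,a}\big)\,a\otimes b .
$$
Since $[a,b]$ is again constitute-homogeneous and the hypothesis $q_{i,j}q_{j,i}=1$ propagates to $q_{a,b}q_{b,a}=1$ for all constitute-homogeneous $a,b$, the braided commutator of two primitive constitute-homogeneous elements is primitive. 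Each $x_i$ is primitive by definition of $\Delta$, each super-letter $[u]$ is constitute-homogeneous, and $[u]=[[u'],[u'']]$ where $\sh(u)=(u',u'')$; so induction on $l(u)$ shows every super-letter is primitive, and hence so is any linear combination of super-letters.

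For the converse I would write a primitive $f$ in the monotonic super-word basis; correspondingly the vectors $D'\otimes D''$ with $D',D''$ monotonic super-words form a basis of $k\langle X\rangle\otimes k\langle X\rangle$. Applying $\varepsilon$ (and using that primitivity forces $\varepsilon(f)=0$) removes the constant term, so $f=\sum_{[u]}\lambda_{[u]}[u]+\sum_{D}\lambda_{D}D$ with the second sum over monotonic super-words of length $\ge2$, and the task is to show each $\lambda_{D}=0$. Put $\overline{\Delta}(x)=\Delta(x)-x\otimes1-1\otimes x$; super-letters lie in $\ker\overline{\Delta}$ by the forward direction, so $0=\overline{\Delta}(f)=\sum_{D}\lambda_{D}\,\overline{\Delta}(D)$. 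For a monotonic super-word $D=[u_1]\cdots[u_r]$, primitivity of the $[u_i]$ yields $\Delta(D)=\prod_{i=1}^{r}\big([u_i]\otimes1+1\otimes[u_i]\big)$, and expanding this product in $\otimes^{c}$ writes it as a sum, over subsets $I\subseteq\{1,\dots,r\}$, of scalar multiples of $\big(\prod_{i\in I}[u_i]\big)\otimes\big(\prod_{i\notin I}[u_i]\big)$ with sub-products kept in increasing order; as $D$ is monotonic, both sub-products are themselves monotonic super-words, i.e. basis vectors. Hence a fixed basis vector of $k\langle X\rangle\otimes k\langle X\rangle$ can occur in $\overline{\Delta}(D)$ for only the unique monotonic super-word $D$ whose super-letter multiset is the union of those of its two tensor factors. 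It then suffices, for each fixed $D_0$ of length $\ge 2$, to point to a single basis vector of the shape ``prefix of $D_0$'' $\otimes$ ``complementary suffix'' occurring in $\overline{\Delta}(D_0)$ with nonzero coefficient: its coefficient in $\overline{\Delta}(f)=0$ is then that scalar times $\lambda_{D_0}$, forcing $\lambda_{D_0}=0$, and we conclude $f=\sum_{[u]}\lambda_{[u]}[u]$.

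The step I expect to be the main obstacle is making that coefficient nonzero. If the smallest super-letter $[u_1]$ of $D_0=[u_1]\cdots[u_r]$ has multiplicity $m<r$, then $[u_1]^{m}\otimes[u_{m+1}]\cdots[u_r]$ is produced only by $I=\{1,\dots,m\}$ and the coefficient is $1$, which settles these $D_0$ at once. The delicate case is $D_0=[u_1]^{r}$: the candidate $[u_1]^{a}\otimes[u_1]^{r-a}$ is produced by all $a$-element subsets, with total coefficient the Gaussian binomial $\binom{r}{a}_{q_{u_1,u_1}}$, and one must choose $0<a<r$ with this nonzero. As $q_{i,j}q_{j,i}=1$ forces $q_{u_1,u_1}=\pm1$, the coefficient degenerates to an ordinary binomial, nonzero for every such $a$ when $q_{u_1,u_1}=1$ since $\operatorname{char}k=0$; in the only residual possibility, $q_{u_1,u_1}=-1$ with $r=2$, the element $[u_1]^{2}$ is itself primitive, so this case must be excluded — as it is, for instance, as soon as the diagonal entries $q_{i,i}$ are $1$ — and handling it correctly, using the precise hypotheses on $q$ together with $\operatorname{char}k=0$, is the one genuinely delicate point of the argument.
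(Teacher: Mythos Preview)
Your approach --- the forward direction by induction using $\Delta([a,b])=[a,b]\otimes1+1\otimes[a,b]+(1-q_{a,b}q_{b,a})\,a\otimes b$, the converse by expanding in the monotonic super-word basis and reading off coefficients in $\overline\Delta$ --- is exactly the classical Friedrichs-type argument that the paper's one-line proof invokes via \cite[Theorem~5.3.13]{Lo}, so the method matches.

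The issue you isolate at the end is real, and it is a gap in the \emph{statement} rather than merely in your argument. The hypothesis $q_{i,j}q_{j,i}=1$ with $i=j$ only gives $q_{i,i}^2=1$, so $q_{i,i}=-1$ is allowed; and then
\[
\Delta(x_i^2)=(x_i\otimes1+1\otimes x_i)^2=x_i^2\otimes1+(1+q_{i,i})\,x_i\otimes x_i+1\otimes x_i^2=x_i^2\otimes1+1\otimes x_i^2,
\]
so $x_i^2$ is primitive. But $x_i^2$ is not a Lyndon word and there is no super-letter of constitute $2e_i$, so $x_i^2$ is not a linear combination of super-letters, and the lemma fails as stated. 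Under the additional hypothesis $q_{i,i}=1$ for all $i$ one gets $q_{u,u}=\prod_i q_{i,i}^{r_i^2}=1$ for every word $u$, the Gaussian binomials in your argument collapse to ordinary binomials (nonzero in characteristic~$0$), and the proof goes through cleanly. Your diagnosis that $r=2$ is the sole obstruction is also correct: for $q_{u,u}=-1$ and $r\geq3$ one always finds $0<a<r$ with $\binom{r}{a}_{-1}\neq0$ (take $a=1$ for $r$ odd, $a=2$ for $r$ even). The paper's proof, being only a citation, does not address this point; you were right to flag it.
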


\begin{proof}
Note that $q_{u,v}q_{v,u}=1$ for any two words $u,v$. The result follows from an essentially the same discussion of that for \cite[Theorem 5.3.13]{Lo}.
\end{proof}

Combine Lemma \ref{Lemma-primitive} and Theorem \ref{Theorem-approach-2}, we conclude:

\begin{corollary}
Assume that $q_{i,j}q_{j,i}=1$ for any $1\leq i,j\leq n$. Let $U$ be a finite closed set of Lyndon words and $A=k\langle X\rangle/(G)$, where $G$ is a set of homogeneous polynomials such that $\Phi(U)\subseteq \lw(G) \subseteq \mathbb{L}\backslash U$. If $G$ is a Gr\"{o}bner set in which every polynomial is a linear combination of super-letters of the same constitute. Then $A$ is AS-regular of global dimension $d=\#(U)$ and of Gorenstein parameter $l=\sum_{u\in U}^d \deg(u)$. Moreover, $A$ is strongly Noetherian, Auslander-regular and Cohen-Macaulay.
\end{corollary}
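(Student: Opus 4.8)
The plan is to verify that every hypothesis of Theorem \ref{Theorem-approach-2} is satisfied for this particular $G$ and then invoke that theorem verbatim. Three of the hypotheses are immediate: $G$ is a Gr\"{o}bner set and satisfies $\Phi(U)\subseteq\lw(G)\subseteq\mathbb{L}\backslash U$ by assumption, while the standing hypothesis $q_{i,j}q_{j,i}=1$ is exactly the hypothesis of Lemma \ref{Lemma-primitive}, so the assumption that every element of $G$ is a linear combination of super-letters forces $G$ to consist of primitive elements of $k\langle X\rangle^c$. What remains is the braiding-stability condition
$$c\big(kG\otimes k\langle X\rangle + k\langle X\rangle\otimes kG\big)\subseteq kG\otimes k\langle X\rangle + k\langle X\rangle\otimes kG.$$

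The key observation I would record is that the scalar $q_{u,v}$ depends only on the constitutes of the two words: for $u=x_{i_1}\cdots x_{i_r}$ and $v=x_{j_1}\cdots x_{j_s}$ one has $q_{u,v}=\prod_{a,b}q_{i_a,j_b}$, which is determined only by how many times each letter occurs in $u$ and in $v$. Hence, writing $q_{\alpha,\beta}$ for this common value when $u,v$ have constitutes $\alpha,\beta\in\mathbb{N}^n$, the braiding acts by $c(u\otimes v)=q_{\alpha,\beta}\,v\otimes u$ on constitute-homogeneous tensors.

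Next I would split everything according to the constitute grading. A super-letter is constitute-homogeneous (all its monomials share the constitute of the underlying Lyndon word), so the hypothesis that each member of $G$ is a linear combination of super-letters of the same constitute says exactly that $G$ consists of constitute-homogeneous polynomials; writing $kG=\bigoplus_\alpha kG_\alpha$ with $kG_\alpha$ the span of the constitute-$\alpha$ members of $G$, and $k\langle X\rangle=\bigoplus_\beta k\langle X\rangle_\beta$ for the constitute decomposition of the free algebra, one gets $c\big(kG_\alpha\otimes k\langle X\rangle_\beta\big)=q_{\alpha,\beta}\,k\langle X\rangle_\beta\otimes kG_\alpha\subseteq k\langle X\rangle\otimes kG$ and, symmetrically, $c\big(k\langle X\rangle_\beta\otimes kG_\alpha\big)\subseteq kG\otimes k\langle X\rangle$. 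Summing over $\alpha$ and $\beta$ yields the stability condition.

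Once this is in place Theorem \ref{Theorem-approach-2} applies directly and gives the full conclusion: $A$ is AS-regular of global dimension $\#(U)$ and Gorenstein parameter $\sum_{u\in U}\deg(u)$, and it is strongly Noetherian, Auslander-regular and Cohen-Macaulay. I do not anticipate any serious difficulty; the only point requiring a little care is the constitute bookkeeping in the third step, namely checking that ``linear combination of super-letters of the same constitute'' genuinely makes $kG$ a direct sum of constitute-homogeneous subspaces on each of which $c$ acts by a single scalar. After that the statement is a formal consequence of results already established.
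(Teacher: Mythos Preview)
Your proposal is correct and follows the same route as the paper: the paper's proof is the one-liner ``Combine Lemma \ref{Lemma-primitive} and Theorem \ref{Theorem-approach-2}'', and you carry out exactly that combination. The only difference is that you explicitly verify the braiding-stability condition $c(kG\otimes k\langle X\rangle + k\langle X\rangle\otimes kG)\subseteq kG\otimes k\langle X\rangle + k\langle X\rangle\otimes kG$ via the constitute grading, which the paper leaves implicit; your argument for it is correct and is indeed the intended reason the ``same constitute'' hypothesis is imposed.
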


\vskip5mm

\end{document}